%
%
%
%
\documentclass[letterpaper,reqno]{amsart}

\usepackage{amsmath} 
\usepackage{amsfonts} 
\usepackage{amssymb} 
\usepackage{epsfig} 
\usepackage{graphicx} 
\usepackage{multirow} 
\usepackage{verbatim} 
\usepackage{rotating} 
\usepackage[all]{xy} 
\usepackage{appendix}


\newtheorem{theorem}{Theorem}[section]
\newtheorem{lemma}[theorem]{Lemma}

\theoremstyle{definition}
\newtheorem{definition}[theorem]{Definition}
\newtheorem{proposition}[theorem]{Proposition}

\newtheorem{warning}[theorem]{Warning}

\theoremstyle{remark}
\newtheorem{remark}[theorem]{Remark}
\theoremstyle{notation}
\newtheorem{notation}[theorem]{Notation}
\numberwithin{equation}{section}
\theoremstyle{corollary}
\newtheorem{corollary}[theorem]{Corollary}

\usepackage[bookmarks=false]{hyperref}
\newcommand{\Map}{\mathrm{Map}}

\newcommand{\Set}{\mathsf{Set}}

\newcommand{\C}{\mathsf{C}}

\newcommand{\EE}{\mathsf{E}_{\infty}}

\newcommand{\HH}{\mathrm{HH}_{k}}

\newcommand{\D}{\mathsf{D}}

\newcommand{\coh}{\mathrm{H}}

\newcommand{\Ho}{\mathrm{Ho}}

\newcommand{\Algk}{\mathsf{dgAlg}_{k}}

\newcommand{\sSetp}{\mathsf{sSet}_{\ast}}
\newcommand{\sSet}{\mathsf{sSet}}

\newcommand{\CAlgk}{\mathsf{dgCAlg}_{k}}

\newcommand{\holim}{\mathrm{holim}}
\newcommand{\hocolim}{\mathrm{hocolim}}



\begin{document}

\title[]{ rational homotopy theory of function spaces and Hochschild cohomology}

\author[]{Ilias Amrani}
\address{Department of Mathematics, Masaryk University\\ Kotlarska 2\\ Brno, Czech Republic.}
\email{ilias.amranifedotov@gmail.com}
\email{amrani@math.muni.cz}
\thanks{Partially supported by the project CZ.1.07/2.3.00/20.0003
of the Operational Programme Education for Competitiveness of the Ministry
of Education, Youth and Sports of the Czech Republic.
}

\thanks{}

\subjclass[2000]{Primary 55, Secondary 14 , 16, 18}



\keywords{DGA, CDGA, Mapping Space, Rational Homotopy Theory, Hochschild Cohomology, Andr\'e-Quillen Cohomology, Harrison Cohomology, Hodge Filtration.}

\begin{abstract}

Given a map $f: X\rightarrow Y$ of simply connected spaces of finite type such. The space of based loops at $f$ of the space of maps between $X$ and $Y$ is denoted by $\Omega_{f}\Map(X,Y)$. For $n> 0$, we give a model categorical interpretation of  the existence (in functorial way) of an injective map of $\mathbb{Q}$-vector spaces $\pi_{n} \Omega_{f}\Map(X,Y_{\mathbb{Q}})  \rightarrow \HH^{-n}(C^{\ast}(Y),C^{\ast}(X)_{f})$, where $\HH^{\ast}$ is the (negative) Hochschild cohomology and $C^{\ast}(X)_{f}$ is the rational cochain complex associated to $X$ equipped with a structure of $C^{\ast}(Y)$-differential graded bimodule via the induced map of differential graded algebras $f^{\ast}:  C^{\ast}(Y)\rightarrow C^{\ast}(X)$. Moreover, we identifiy the image in presice way by using the Hodge filtration on Hochschild cohomology. In particular, when $X=Y$, we describe the fundamental group of the identity component of the monoid of self equivalence of a (rationalization of) space $X$ i.e., $\pi_{1} Aut(X_{\mathbb{Q}})_{id}$.
\end{abstract}

\maketitle
\section*{Introduction}
Our main goal in this article is the study of the function space $\Map(X,Y)$ between two \textbf{rational} topological spaces from non-commutative point of view. More precisely, for a fixed map $f:X\rightarrow Y$ we study the homotopy groups of the path connected component $\Map(X,Y_{\mathbb{Q}})_{f}$. It is well known \cite{block2005andre} that \textbf{rationally} (under some finiteness conditions) the homotopy groups $(\pi_{n}, ~n> 1)$ of  $\Map(X,Y_{\mathbb{Q}})_{f}$ are given by the Andr\'e-Quillen cohomology $\mathrm{AQ}^{-n}(C^{\ast}(Y),C^{\ast}(X))$, where $C^{\ast}(X)$ is seen as a module over $C^{\ast}(Y)$ via the induced map of $\EE$-differential graded algebras $f^{\ast}:C^{\ast}(Y)\rightarrow C^{\ast}(X)$. The point is that the Andr\'e-Quillen cohomology is quite complicated to compute. We should notice that we are using the fact that any rational $\EE$-differential graded algebra is equivalent to a rational commutative differential graded algebra. Let $k$ be any commutative ring, and denote the model category of $\EE$-differential graded $k$-algebras by $\EE-\Algk$ and the model category of associative differential graded $k$-algebras by $\Algk$. The (derived) forgetful functor  $U:\EE-\Algk\rightarrow \Algk$ induces a map of simplicial sets 
$$\alpha:\Map_{\EE-\Algk}(R,S)\rightarrow \Map_{\Algk}(R,S):=\Map_{\Algk}(UR,US).$$
In all what follows, we will consider only the positively graded algebras with increasing differentials by degree one. A perfect example is the cochain complex associated to a topological space. The interpretation of the higher homotopy groups is quite simple, in fact in \cite{amrani2013mapping}, we have shown that for a given map $f:R\rightarrow S$ of differential graded $k$-algebras we have
\begin{equation}\label{eq}
\pi_{n}\Map_{\Algk}(R,S)_{f}\cong \HH^{-n+1}(R,S_{f}) ~ \forall~n>1,
\end{equation}
where $\HH^{\ast}$ is the Hochschild cohomology and $S$ is seen as $R$-bimodule via $f$. 
\subsection*{Rational homotopy theory} When $k=\mathbb{Q}$, Sullivan has proven that there is an $\infty$-equivalence between the category of simply connected rational spaces (finite type) and a subcategory of simply connected commutative differential graded $k$-algebras (of finite type)  \cite{sullivan1977infinitesimal}. The $\infty$-equivalence is given by the cochain functor $C^{\ast}(-,k)$ after strictification. For any map $f:X\rightarrow Y$ of  rational simply connected spaces, the forgetful functor $U$ induces the following map of $k$-vector spaces
$$\pi_{n+1}\alpha:~\pi_{n+1}\Map(X,Y_{\mathbb{Q}})_{f}\rightarrow \HH^{-n}(C^{\ast}(Y), C^{\ast}(X)_{f}).$$
In \cite[Theorem 3.8]{block2005andre}, Block and Lazarev give an explicit formula when $f$ is \textbf{homotopy equivalent to a constant map}. They (re)proved that (under the convention that the cohomology is negatively graded)
$$\pi_{n}\Map(X,Y_{\mathbb{Q}})_{f}\cong \prod_{i=1}^{\infty}\pi_{i}(Y)\otimes \coh^{i-n}(X,{\mathbb{Q}}).$$ 
\subsection*{$p$-Adic homotopy theory}
When $p$ is a prime number and $k=\overline{\mathbb{F}}_{p}$ the algebraic closure of the field with $p$-elements. Mandell's fundamental theorem \cite{mandell2001sub} says that the $\infty$-category of $p$-complete spaces (with some finiteness conditions) is $\infty$-equivalent to a full $\infty$-subcategory of $\EE$-differential graded $k$-algebras via the cochain functor $C^{\ast}(-,\overline{\mathbb{F}}_{p})$. Suppose that $f:X\rightarrow Y$ is a map of simply connected spaces (with some finiteness conditions), then the forgetful functor $U$ induces the following map of abelian groups ($n>0$)
$$\pi_{n+1}\alpha:~\pi_{n+1}\Map(X,Y^{\wedge}_{p})_{f}\rightarrow \HH^{-n}(C^{\ast}(Y), C^{\ast}(X)_{f}),$$ 
where $Y\rightarrow Y^{\wedge}_{p}$ is a $p$-completion functor. 
\begin{theorem}[\ref{main}]
Suppose that $k=\mathbb{Q}$. Let $f: X\rightarrow Y$ be a map of simply connected spaces of (finite type), then the forgetful functor $$U:\Map_{\EE-\Algk}(C^{\ast}(Y),C^{\ast}(X))\rightarrow \Map_{\Algk}(C^{\ast}(Y),C^{\ast}(X))$$
induces a map of $k$-vector spaces such that:
\begin{enumerate}
\item $[X,Y_{\mathbb{Q}}]=\pi_{0}\Map_{\EE-\Algk}(C^{\ast}(Y),C^{\ast}(X))\rightarrow \pi_{0}\Map_{\Algk}(C^{\ast}(Y),C^{\ast}(X))$ is injective.
\item
$\pi_{1}\Map(X,Y_{\mathbb{Q}})_{f}=\pi_{1}\Map_{\EE-\Algk}(C^{\ast}(Y),C^{\ast}(X))_{f}\rightarrow \pi_{1}\Map_{\Algk}(C^{\ast}(Y),C^{\ast}(X))_{f}$ is injective map of groups.
\item $ \forall~n>0, $ the induced map
$$\pi_{n+1}\Map(X,Y_{\mathbb{Q}})_{f}=\pi_{n+1}\Map_{\EE-\Algk}(C^{\ast}(Y),C^{\ast}(X))_{f}\rightarrow \HH^{-n}(C^{\ast}(Y), C^{\ast}(X)_{f}),$$
 is injective map of $\mathbb{Q}$-vector spaces.
\item
If $X=Y$ and $f=id$, then $\pi_{1}Aut(X_{\mathbb{Q}})_{id}\rightarrow \HH^{0,\times}(C^{\ast}(X),C^{\ast}(X))$ is an injective map of abelian groups.
\end{enumerate}
 The space $X_{\mathbb{Q}}$ is the rationalization of $X$, $Aut(X)$ is the monoid of self equivalences, and 
$\HH^{0,\times}(C^{\ast}(X),C^{\ast}(X))$ is the group of invertible elements of the $k$-algebra $\HH^{0}(C^{\ast}(X),C^{\ast}(X)).$ 
\end{theorem}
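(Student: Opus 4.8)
The plan is to reduce every clause to a single algebraic assertion: after the standard identifications, the map induced by the forgetful functor $U$ on homotopy groups is the inclusion of the weight-one summand of the Hodge ($\lambda$-)decomposition of Hochschild cohomology, and such a summand inclusion is automatically injective. First I would record the identifications. By Sullivan's $\infty$-equivalence \cite{sullivan1977infinitesimal}, the rational cochain functor identifies $\Map(X,Y_{\mathbb{Q}})_{f}$ with $\Map_{\EE-\Algk}(C^{\ast}(Y),C^{\ast}(X))_{f}$; by Block--Lazarev \cite{block2005andre} its homotopy groups are André--Quillen cohomology,
$$\pi_{n+1}\Map_{\EE-\Algk}(C^{\ast}(Y),C^{\ast}(X))_{f}\cong \mathrm{AQ}^{-n-1}(C^{\ast}(Y),C^{\ast}(X)),\qquad n>0;$$
and on the associative side, formula \eqref{eq} of \cite{amrani2013mapping} gives $\pi_{n+1}\Map_{\Algk}(C^{\ast}(Y),C^{\ast}(X))_{f}\cong \HH^{-n}(C^{\ast}(Y),C^{\ast}(X)_{f})$. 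So clause (3) becomes injectivity of the comparison map $\mathrm{AQ}^{-n-1}\to \HH^{-n}$ induced by $U$.

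The main input is that $k=\mathbb{Q}$ has characteristic zero, so the Hochschild cochain complex of a commutative dg algebra $A$ with a symmetric bimodule $M$ (such as $C^{\ast}(X)_{f}$, symmetric because $C^{\ast}(Y)$ is homotopy commutative) carries the Eulerian idempotents $e^{(p)}$, giving a natural splitting $\HH^{m}(A,M)=\bigoplus_{p\ge 1}\mathrm{HH}^{m}_{(p)}(A,M)$ into Hodge weights. The weight-one part $\mathrm{HH}^{m}_{(1)}$ is Harrison cohomology, which in characteristic zero is a shift of André--Quillen cohomology, $\mathrm{HH}^{m}_{(1)}(A,M)\cong \mathrm{Harr}^{m}(A,M)\cong \mathrm{AQ}^{m-1}(A,M)$. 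With $m=-n$ this reads $\mathrm{HH}^{-n}_{(1)}\cong \mathrm{AQ}^{-n-1}$, exactly the source of the comparison map, so the degree shifts match on the nose. It then remains to check that $U$ induces precisely the inclusion of $\mathrm{HH}^{-n}_{(1)}$: concretely, that $U$ acts on the linearized (tangent) mapping complexes as the antisymmetrization/$e^{(1)}$ embedding of the Harrison (cotangent) complex into the full Hochschild complex as a retract. Granting this, clause (3) is immediate and, since the Eulerian idempotents realize the Hodge filtration, the image is identified with its weight-one piece --- this is the ``moreover'' of the abstract.

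For the low-degree clauses (1) and (2), where \eqref{eq} does not apply, I would argue along a principal refinement of the Postnikov tower of $Y_{\mathbb{Q}}$, equivalently an iterated Hirsch extension presenting a minimal model of $Y$. At each stage the sets and groups classifying lifts and homotopies of maps are governed by André--Quillen groups, which the idempotent $e^{(1)}$ exhibits as natural retracts of the corresponding Hochschild groups; the existence of this natural retraction lets me project any associative-level homotopy back to a commutative-level one, yielding injectivity on $\pi_{0}$ in clause (1). For clause (2) the comparison is a homomorphism because it is induced by the functor $U$, and the same retraction gives its injectivity.

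Finally, for clause (4), specialize to $X=Y$, $f=\mathrm{id}$: the component $\Map(X,X_{\mathbb{Q}})_{\mathrm{id}}$ is the identity component of $Aut(X_{\mathbb{Q}})$, so $\pi_{1}Aut(X_{\mathbb{Q}})_{\mathrm{id}}=\pi_{1}\Map(X,X_{\mathbb{Q}})_{\mathrm{id}}$, which by clause (2) injects into $\pi_{1}\Map_{\Algk}(C^{\ast}(X),C^{\ast}(X))_{\mathrm{id}}$; the boundary case of \eqref{eq} identifies this last group with the units $\HH^{0,\times}(C^{\ast}(X),C^{\ast}(X))$ of the degree-zero Hochschild algebra, and composing gives the desired injection of abelian groups. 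I expect the genuine obstacle to be the claim isolated in the second paragraph --- that $U$ induces exactly the $e^{(1)}$-inclusion, rather than merely some map between a summand and its ambient space. This requires real compatibility between the tower/filtration on the derived mapping space and the $\lambda$-filtration on Hochschild cohomology, together with naturality of the Eulerian retraction; everything else is bookkeeping of shifts.
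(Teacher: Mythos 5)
There is a genuine gap, and you have located it yourself: your entire argument rests on the claim that the map induced by the forgetful functor $U$ on tangent complexes is precisely the Eulerian-idempotent inclusion $e^{(1)}$ of the Harrison complex into the Hochschild complex, and you do not prove this. Everything else in your proposal (the Sullivan and Block--Lazarev identifications, the shift bookkeeping $\mathrm{HH}^{-n}_{(1)}\cong\mathrm{Harr}^{-n}\cong\mathrm{AQ}^{-n-1}$) is correct but is exactly the easy part; without the compatibility statement you have no injectivity at all, only a map from a group abstractly isomorphic to a summand into the ambient group. Note that even the paper does not establish this compatibility: its Theorem \ref{main2} identifies $\pi_{n+1}\Map(X,Y_{\mathbb{Q}})_{f}$ with $\mathrm{HH}^{-n}_{(1)}$ by a chain of abstract isomorphisms, separately from the injectivity statement, and never shows that $\alpha$ \emph{is} the $e^{(1)}$-inclusion. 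Your proposal therefore hinges on proving something strictly stronger than what the paper proves, and the proof of that stronger statement is the whole difficulty.

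The paper's actual route avoids this issue entirely and is worth contrasting with yours. Injectivity on all $\pi_{i}$, $i\geq 0$, comes from a model-categorical retraction: Theorem \ref{th} (from \cite{amranirational}) supplies a functorial retract of $\Omega\Map_{\EE-\Algk^{\ast}}(R,S)\rightarrow\Omega\Map_{\Algk^{\ast}}(R,S)$, and Lemma \ref{fund} promotes this to a retract of the full (unlooped, unaugmented) mapping spaces by a Brown-representability argument: the functors $X\mapsto\Map(R,C^{\ast}(X))$ are representable by spaces $C$ and $A$, the looped retraction gives a retract of $\Omega C\rightarrow\Omega A$, and connectedness of $R$ delivers the retract before looping. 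Once a retract exists, injectivity on every homotopy group --- including the $\pi_{0}$ and $\pi_{1}$ clauses (1) and (2), for which your Postnikov-tower sketch is not carried out and would require a separate obstruction-theoretic analysis --- is automatic, with no need to identify the map with any Hodge summand inclusion. The Hochschild and Andr\'e--Quillen identifications are then layered on top purely to name the groups. If you want to salvage your approach, the missing lemma you must supply is the naturality of the Eulerian retraction with respect to the derived mapping-space comparison induced by $U$; as written, your proof is circular at that point.
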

\begin{warning}
When $k=\overline{\mathbb{F}}_{p}$, the induced maps $\pi_{n}\alpha$ are far to be injective in general.
\end{warning}
\begin{theorem}[Hodge filtration \ref{main2}]
With the same assemption as in precedent Theorem, we have the following isomorphism
$$\pi_{n+1}\Map(X,Y_\mathbb{Q})_{f}\cong  \mathrm{HH}^{-n}_{(1)}(C^{\ast}(Y),C^{\ast}(X)_{f}), ~\forall~n>0, \forall ~f.$$
\end{theorem}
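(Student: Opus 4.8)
The plan is to upgrade the injectivity of Theorem~\ref{main}(3) to an isomorphism by identifying its image with the weight-one piece of the Hodge decomposition. Since $k=\mathbb{Q}$, every rational $\EE$-differential graded algebra is equivalent to an object of $\CAlgk$, so I would first replace $\EE-\Algk$ by $\CAlgk$ and invoke the Block--Lazarev computation recalled in the introduction to identify, for every basepoint $f$,
$$\pi_{n+1}\Map_{\EE-\Algk}(C^{\ast}(Y),C^{\ast}(X))_{f}\cong \mathrm{AQ}^{-(n+1)}(C^{\ast}(Y),C^{\ast}(X)_{f}),\qquad n>0,$$
the homotopy of the commutative mapping space being computed by the cotangent complex of the map $f^{\ast}$, i.e.\ by Andr\'e--Quillen cohomology.

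Next I would recall the Hodge (or $\lambda$-)decomposition of the Hochschild cochain complex. Over $\mathbb{Q}$ the Eulerian idempotents $e^{(i)}\in\mathbb{Q}[S_{m}]$ split the normalized Hochschild complex of a commutative algebra $A$ with coefficients in a symmetric bimodule into weight-graded subcomplexes; here $A=C^{\ast}(Y)$ is commutative and $M=C^{\ast}(X)_{f}$ is symmetric because $f^{\ast}$ is a map of commutative algebras, so this applies and gives
$$\HH^{-n}(C^{\ast}(Y),C^{\ast}(X)_{f})\cong\bigoplus_{i\ge 1}\mathrm{HH}^{-n}_{(i)}(C^{\ast}(Y),C^{\ast}(X)_{f}).$$
By definition the weight-one summand $\mathrm{HH}^{-n}_{(1)}$ is Harrison cohomology, and in characteristic zero Harrison cohomology in degree $m$ coincides with Andr\'e--Quillen cohomology in degree $m-1$ (Quillen, Gerstenhaber--Schack); with the conventions of \eqref{eq} and of Block--Lazarev this yields a canonical isomorphism $\mathrm{AQ}^{-(n+1)}\cong\mathrm{HH}^{-n}_{(1)}$ compatible with the summand inclusion $\mathrm{HH}^{-n}_{(1)}\hookrightarrow\HH^{-n}$.

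The heart of the argument is to check that the map $\pi_{n+1}\alpha$ of Theorem~\ref{main}, under the two identifications above, is exactly this summand inclusion. For this I would model the forgetful functor $U:\EE-\Algk\rightarrow\Algk$ by the map of (cofibrantly resolved) operads $\mathrm{Ass}\rightarrow\mathrm{Com}$ and observe that the induced map on deformation complexes at $f$ is precisely the inclusion of the Harrison complex into the Hochschild complex, whose image is the weight-one subcomplex. Transporting this through the identification of mapping-space homotopy groups with the cohomology of these deformation complexes shows that $\pi_{n+1}\alpha$ factors as
$$\mathrm{AQ}^{-(n+1)}(C^{\ast}(Y),C^{\ast}(X)_{f})\;\cong\;\mathrm{HH}^{-n}_{(1)}(C^{\ast}(Y),C^{\ast}(X)_{f})\;\hookrightarrow\;\HH^{-n}(C^{\ast}(Y),C^{\ast}(X)_{f}).$$
Since Theorem~\ref{main}(3) already guarantees injectivity and the first arrow lands in the weight-one summand, the source is carried isomorphically onto $\mathrm{HH}^{-n}_{(1)}$, which is the claim.

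I expect the main obstacle to be exactly this compatibility step: verifying that the three descriptions of $\pi_{n+1}\alpha$ — the homotopical one coming from $U$ on mapping spaces, the operadic one coming from $\mathrm{Ass}\to\mathrm{Com}$, and the complex-level Harrison-into-Hochschild inclusion — genuinely agree, with matching degree shifts. This requires comparing the cofibrant models used on the commutative and associative sides, checking that the Eulerian idempotent splitting is the one induced by the symmetric structure underlying $U$, and carrying the degree bookkeeping of \eqref{eq} (the passage from $\pi_{n+1}$ to $\HH^{-n}$, and the degree-$1$ gap between Andr\'e--Quillen and weight-one Hochschild cohomology) through consistently.
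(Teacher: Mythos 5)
Your chain of identifications is exactly the paper's proof: Block--Lazarev gives $\pi_{n+1}\Map(X,Y_{\mathbb{Q}})_{f}\cong \mathrm{AQ}^{-n-1}(C^{\ast}(Y),C^{\ast}(X)_{f})$, Ginot's theorem identifies the weight-one Hodge summand with Harrison cohomology, and the characteristic-zero shift $\mathrm{AQ}^{m-1}=\mathrm{Harr}^{m}$ closes the loop. Where you diverge is in what you take the theorem to assert. You read it as a statement about the \emph{image} of the injection $\pi_{n+1}\alpha$ of Theorem \ref{main}(3), and you correctly flag that this requires an extra compatibility check: that the map induced by the forgetful functor $U$, the operadic map $\mathsf{Ass}\to\mathsf{Com}$ on deformation complexes, and the chain-level inclusion of the Harrison complex into the Hochschild complex all agree under the identifications, with matching degree shifts. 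The paper does not carry out any of this; its proof establishes only the abstract isomorphism $\pi_{n+1}\Map(X,Y_{\mathbb{Q}})_{f}\cong \mathrm{HH}^{-n}_{(1)}$ by composing the three isomorphisms above, and never verifies that this isomorphism is compatible with the summand inclusion $\mathrm{HH}^{-n}_{(1)}\hookrightarrow\HH^{-n}$ and the map $\pi_{n+1}\alpha$ (even though the abstract advertises an identification of the image). So for the literal statement your argument is the same and already complete after the first two paragraphs; your ``heart of the argument'' is additional work that would be needed to justify the stronger image statement, and it is the genuinely nontrivial part that neither you nor the paper actually carries out in detail.
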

\section{General framework}

For what follows we fixe $k=\mathbb{Q}$. Notice that $\EE-\Algk\simeq\CAlgk$. In the abstract we described only the applications. In order to prove them we pass by the model category of differential graded algebras (commutative and non-commutative). We denote the pointed model category of augmented (resp. commutative and $\EE$) differential graded $k$-algebras by $\Algk^{\ast}$ (resp. $\CAlgk^{\ast} $ and $\EE-\Algk^{\ast}$). Notice that the model structure in the commutative case make sense when $k$ is of characteristic 0. For some technical reasons, we define the functor of cochain complexes $C^{\ast}(-,k)=C^{\ast}(-):\sSet^{op}\rightarrow \EE-\Algk$. In this section, a space means a simplicial set.  
\begin{notation}\label{EE}
All differential graded algebras are non-negatively graded and the differentials increase the degree by +1.
Consider the map of operads (in the differential graded context) $\mathsf{Ass}\rightarrow \mathsf{Com}$, since the category of operad is a model category we have the factorization 
$\mathsf{Ass}\rightarrow\EE\rightarrow \mathsf{Com}$, where the first map is a cofibration and the second map is a trivial fibration. We have shown in \cite[Lemma 1.1]{amranirational}, that $\EE$ is admissible and the forgetful functor $U:\EE-\Algk^{\ast}\rightarrow \Algk^{\ast}$ preserves cofibrant objects and cofibration between cofibrant objects. That is the reason why we work with $\EE$-operad instead of the operad $\mathsf{Com}$.   
\end{notation}

Recall that we have a following diagram of (Quillen) adjunctions:

$$\xymatrix{\Algk\ar@<2pt>[rr]^{F}\ar@<2pt>[dd]^{\underline{U}}&& \EE-\Algk\ar@<2pt>[dd]^{\underline{U}}\ar@<2pt>[ll]^{U} \\
&&\\
\Algk^{\ast}\ar@<2pt>[rr]^{F}\ar@<2pt>[uu]^-{-\oplus k} && \EE-\Algk^{\ast}\ar@<2pt>[uu]^-{-\oplus k}\ar@<2pt>[ll]^{U}}$$
where, $F$ and $\underline{U}$ are left adjoints and $U$, $\oplus k$ are right adjoints.
 \begin{warning}
In what follows, we took the liberty to not denote the forgetful functors i.e., when $R$ is an (augmented) $\EE$-differential graded algebra we consider it also as an (augmented) associative differential graded algebra without mentioning the forgetful functor. 
\end{warning}
\begin{theorem}\cite[Theorem 3.1]{amranirational}\label{th}
 Let $k=\mathbb{Q}$, for any $R$ and $S$ augmented commutative differential graded $k$-algebras, the forgetful functor $U:\EE-\Algk^{\ast}\rightarrow \Algk^{\ast}$ induces a map $\alpha:~\Map_{\EE-\Algk^{\ast}}(R,S)\rightarrow \Map_{\Algk^{\ast}}(R,S)$ such that 
$$\pi_{n}\Map_{\EE-\Algk^{\ast}}(R,S)\rightarrow \pi_{n}\Map_{\Algk^{\ast}}(R,S)$$
is injective map of groups for $n>0$. Moreover, the map $\Omega\Map_{\EE-\Algk^{\ast}}(R,S)\rightarrow \Omega\Map_{\Algk^{\ast}}(R,S)$ has a functorial retract with respect to the target argument $S$ .  
\end{theorem}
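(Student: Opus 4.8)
The plan is to deduce the injectivity statement from the stronger assertion at the end, namely the existence of a \emph{functorial retract} of the loop-space map, since a retraction $r$ with $r\circ\Omega\alpha\simeq\mathrm{id}$ exhibits $\Omega\alpha$ as a split monomorphism and hence forces $\pi_{n}\alpha$ to be (split) injective for all $n>0$, respecting the group structure because the maps in play are loop maps. By Notation~\ref{EE} the forgetful functor $U$ preserves cofibrant objects and cofibrations between cofibrant objects, so a single cofibrant replacement $QR$ of $R$ in $\EE-\Algk^{\ast}$ serves on both sides at once; modelling $\Map$ through a simplicial frame of a fibrant $S$, the comparison $\alpha$ is then induced levelwise by $U$. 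First I would record that, at a basepoint $f$, the two loop spaces have identifiable homotopy,
$$\pi_{n}\Omega_{f}\Map_{\Algk^{\ast}}(R,S)\cong\HH^{-n}(R,S_{f}),\qquad \pi_{n}\Omega_{f}\Map_{\EE-\Algk^{\ast}}(R,S)\cong\mathrm{AQ}^{-n}(R,S_{f}),$$
for $n\geq 1$, the first by \eqref{eq} (equivalently \cite{amrani2013mapping}) and the second by the rational identification of the higher homotopy of $\EE$-algebra mapping spaces with André--Quillen cohomology \cite{block2005andre}. Under these identifications $\pi_{n}\alpha$ is precisely the canonical comparison map $\mathrm{AQ}^{-n}(R,S_{f})\to\HH^{-n}(R,S_{f})$.

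The key algebraic input is the Hodge (or $\lambda$-) decomposition of Hochschild cohomology, available since $k=\mathbb{Q}$. Because $R$ and $S$ are commutative (we are rational, so $\EE\simeq\Com$), the Eulerian idempotents $e^{(i)}_{n}\in\mathbb{Q}[\Sigma_{n}]$ act on the reduced Hochschild cochains $\Hom(\bar R^{\otimes n},S)$ and, crucially, commute with the Hochschild differential; this splits the cochain complex by weight and yields
$$\HH^{\ast}(R,S_{f})\cong\bigoplus_{i\geq 1}\mathrm{HH}^{\ast}_{(i)}(R,S_{f}),$$
with the weight-one summand $\mathrm{HH}^{\ast}_{(1)}$ being exactly the Harrison/André--Quillen cohomology $\mathrm{AQ}^{\ast}(R,S_{f})$ (Gerstenhaber--Schack, Loday). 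The comparison map $\mathrm{AQ}\to\HH$ induced by $U$ is the inclusion of this summand, so it is split injective, the retraction being the weight-one projection. Since the idempotents act only on the tensor powers of $\bar R$, this projection is a chain map \emph{natural in the target} $S$, which is the source of the asserted functoriality ``with respect to the target argument $S$''.

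To upgrade this splitting from cohomology to an honest retraction of loop spaces, I would carry the Eulerian projection through the (co)simplicial model computing the mapping spaces. After linearization the object computing $\Omega\Map_{\Algk^{\ast}}(R,S)$ is, via Dold--Kan, a simplicial abelian group whose normalization is the Hochschild cochain complex; the levelwise action of the idempotent assembles into a self-map that is idempotent up to coherent homotopy, whose image realizes $\Omega\Map_{\EE-\Algk^{\ast}}(R,S)$. Totalizing produces the desired map $r$ with $r\circ\Omega\alpha\simeq\mathrm{id}$, natural in $S$. Heuristically, rationally these derived mapping spaces behave like products of Eilenberg--MacLane spaces on the above cohomology, so a natural splitting on homotopy groups is necessarily realized by a map of spaces; but I would prefer to obtain $r$ directly from the levelwise idempotent rather than invoke formality.

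The main obstacle is precisely this last passage: exhibiting the weight-one projection not merely as a splitting of graded vector spaces but as a self-map of the simplicial model of $\Omega\Map$, functorial in $S$ and compatible with the chosen framing. One must verify that the Eulerian idempotents, defined on cochains, lift coherently to the resolution, and that the identification $\mathrm{HH}_{(1)}=\mathrm{AQ}$ holds on the nose at the level of models rather than only on cohomology. Both hypotheses are indispensable here: the commutativity of $S$ ensures the differential preserves weight, and characteristic zero ensures the idempotents exist. Indeed, the \textbf{Warning} following the main theorem records that over $\overline{\mathbb{F}}_{p}$ the maps fail to be injective in general, exactly because this decomposition is no longer available.
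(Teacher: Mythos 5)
There is a genuine gap, and it sits exactly where you locate it yourself: the passage from a splitting of graded vector spaces to a functorial retraction of loop spaces. Note first that the paper does not actually prove Theorem \ref{th} here --- it is imported from \cite{amranirational}, and the appendix (\ref{complement}) only records what the proof there produces: a natural map $S^{1}\otimes R\rightarrow S^{1}\otimes FUR$ admitting a retract in $\Ho(\EE-\Algk^{\ast})$, i.e.\ a splitting \emph{upstream}, at the level of the suspension objects in the algebra category, which then yields functoriality in $S$ for free by applying $\Map_{\EE-\Algk^{\ast}}(-,S)$. Your route works downstream, on the Hochschild/Harrison cochain complexes of the pair $(R,S_{f})$, and therefore has to solve a realization problem: the Eulerian idempotents live on $\Hom(\bar R^{\otimes n},S)$, but $\Omega_{f}\Map_{\Algk^{\ast}}(R,S)$ is not a simplicial abelian group whose normalization is that complex --- its homotopy groups are computed by Hochschild cohomology, which is much weaker than saying the space itself linearizes. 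Producing a self-map of the actual mapping space that is idempotent up to coherent homotopy, has the correct image, and is natural in $S$ is the entire content of the theorem, and your sketch does not supply it; the fallback heuristic (rational group-like $H$-spaces split as products of Eilenberg--MacLane spaces) can give an unnatural splitting for each fixed $S$ at best, and even that requires $\pi_{0}\Omega_{f}\Map=\pi_{1}(\Map,f)$ to be handled separately, since it is not covered by the identification with $\HH^{-n}$ (equation \eqref{eq} holds only for $n>1$).

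Two further points need attention even granting the realization step. First, the claim that $\pi_{n}\alpha$ coincides with the canonical comparison $\mathrm{AQ}^{-n}(R,S_{f})\rightarrow\HH^{-n+1}(R,S_{f})$, and that the latter is precisely the inclusion of the weight-one Hodge summand, is asserted rather than proved; this compatibility between the map induced by the derived forgetful functor on mapping spaces and the algebraically defined Quillen-to-Hochschild map is a nontrivial identification (the paper itself only establishes the Hodge identification a posteriori, in Theorem \ref{main2}, \emph{after} knowing injectivity). Second, Theorem \ref{th} is stated for arbitrary augmented commutative differential graded $\mathbb{Q}$-algebras $R$ and $S$, whereas the Block--Lazarev identification of $\pi_{n}\Map_{\EE-\Algk}(R,S)_{f}$ with Andr\'e--Quillen cohomology is established under connectivity and finiteness hypotheses; as written your argument silently imports those hypotheses. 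Your closing observation about characteristic zero and the Warning over $\overline{\mathbb{F}}_{p}$ is apt, and the algebraic picture you describe is consistent with Theorem \ref{main2}; but as a proof of the quoted statement the argument is incomplete at its central step.
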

For more details, we refere to \ref{complement}.

\begin{lemma}\label{commutes}
Let $k$ be any field.
The (derived) functor $C^{\ast}(-): \sSet^{op}\longrightarrow \EE\Algk$ commutes with homotopy limits. 
\end{lemma}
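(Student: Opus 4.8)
The plan is to reduce the statement to a computation on underlying chain complexes, where $k$-linear duality makes the comparison transparent. Recall that a homotopy limit in $\sSet^{op}$ is the same thing as a homotopy colimit in $\sSet$, so the assertion is that for a small diagram $X_{\bullet}\colon I\to\sSet$ the natural comparison map
$$C^{\ast}(\hocolim_{I} X_{i}) \longrightarrow \holim_{I} C^{\ast}(X_{i})$$
in $\EE-\Algk$ is a weak equivalence. This map is produced from the structure maps $X_{i}\to \hocolim_{I}X_{i}$ together with the contravariance of $C^{\ast}$, and its naturality is immediate; the only issue is that it be an equivalence.

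First I would pass to underlying objects. The forgetful functor $U\colon\EE-\Algk\to\Chk$ is right Quillen (its left adjoint is the free $\EE$-algebra functor, which exists by the admissibility recorded in Notation \ref{EE}); moreover it creates weak equivalences and fibrations, so it preserves homotopy limits and reflects weak equivalences. Consequently the displayed comparison map is a weak equivalence in $\EE-\Algk$ if and only if its image under $U$ is a weak equivalence in $\Chk$, using $U(\holim_{I}C^{\ast}(X_{i}))\simeq \holim_{I}U\,C^{\ast}(X_{i})$. Thus it suffices to treat the underlying cochain functor $C^{\ast}(-)\colon\sSet^{op}\to\Chk$.

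At the level of chain complexes I would factor $C^{\ast}(-)=\Hom_{k}(-,k)\circ C_{\ast}(-)$, where $C_{\ast}(-)\colon\sSet\to\Chk$ is the normalized chains functor. The functor $C_{\ast}$ is left Quillen, hence commutes with homotopy colimits, so $C_{\ast}(\hocolim_{I}X_{i})\simeq \hocolim_{I}C_{\ast}(X_{i})$. Because $k$ is a field, $k$-linear duality $\Hom_{k}(-,k)$ is exact (it preserves quasi-isomorphisms) and it carries coproducts to products and cofibre sequences to fibre sequences; therefore it sends homotopy colimits in $\Chk$ to homotopy limits in $\Chk$. Composing the two identifications recovers exactly the comparison map above, now seen to be an equivalence, and transporting back along $U$ finishes the argument.

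The step I expect to require the most care is the commutation of $\Hom_{k}(-,k)$ with \emph{infinite} homotopy colimits. For finite diagrams (products, pushouts, bounded totalizations) exactness of duality over a field settles the matter at once. In general the Bousfield--Kan model of $\hocolim_{I}$ is the realization of a simplicial object, i.e.\ a sequential colimit of its skeleta, and dualizing turns this into a cosimplicial totalization, i.e.\ a sequential homotopy limit; here one must check that the associated $\lim^{1}$-terms match on the two sides, so that the Milnor sequences are identified. This is exactly where the field hypothesis is genuinely used, since duality then interchanges the filtered colimits computing $C_{\ast}$ with the cofiltered limits computing the totalization without introducing derived-functor defects. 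I note that, more conceptually, the lemma also follows from the observation that $C^{\ast}(-)$ is the right adjoint in a (Quillen) adjunction $\langle-\rangle\dashv C^{\ast}$ with $\langle A\rangle_{\bullet}=\Hom_{\EE-\Algk}(A,C^{\ast}(\Delta^{\bullet}))$, a right adjoint automatically preserving homotopy limits; but verifying that this adjunction is Quillen (that $C^{\ast}$ sends injections of simplicial sets to fibrations and trivial cofibrations to trivial fibrations) reduces to the same chain-level input, so I would keep the explicit duality argument.
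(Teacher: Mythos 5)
Your argument is correct, but it takes a genuinely different route from the paper. The paper's proof is a two-line appeal to abstract nonsense: by Mandell's Proposition 4.2 the functor $C^{\ast}(-)\colon\sSet^{op}\to\EE-\Algk$ admits a left adjoint and the pair is Quillen, so $C^{\ast}$, being a (derived) right adjoint, preserves homotopy limits with no further work; this is precisely the ``more conceptual'' alternative you mention and then set aside at the end of your proposal. Your route instead reduces along the forgetful functor $U\colon\EE-\Algk\to\Chk$ (legitimate, since the model structure on $\EE$-algebras is transferred, so $U$ creates fibrations and weak equivalences and preserves homotopy limits) and then runs the explicit factorization $C^{\ast}=\Hom_{k}(-,k)\circ C_{\ast}$, using that normalized chains is left Quillen and that linear duality over a field is exact and turns the skeletal filtration of a Bousfield--Kan homotopy colimit into a Mittag-Leffler tower, killing the $\lim^{1}$ obstruction. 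What your version buys is self-containedness and a clear display of where the field hypothesis actually enters (exactness of duality and vanishing of $\lim^{1}$); what it costs is the Reedy-fibrancy bookkeeping for the dualized (co)simplicial object, which you gesture at but do not fully write out --- an acceptable level of detail given that the paper's own proof is even terser. The one caveat worth recording is that your dismissal of the adjunction argument is slightly unfair to the paper: Mandell's result is cited precisely so that the Quillen-pair verification does not have to be redone by hand, so the paper's proof does not in fact ``reduce to the same chain-level input'' for the reader.
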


\begin{proof}
The functor $C^{\ast}$  has a left adjoint (cf \cite[Proposition 4.2 ]{mandell2001sub}), they form a Quillen pair. the homotopy limits in $\sSet^{op}$ are the homotopy colimits in  $\sSet$, it follows that for any diagram $J\rightarrow \sSet$ we have an isomorphism $C^{\ast}(\hocolim_{j\in J} X_{j})\cong \holim_{j\in J} C^{\ast}(X_{j})$ in the homotopy category $\Ho(\sSet)$. 
\end{proof}
\begin{notation}
We denote the simplicial sphere of dimension $n$ by $S^{n}$. 
\end{notation}

\begin{definition}\label{connected}
Let $R$ be an augmented $\EE$-differential graded $k$-algebra, we say that $R$ connected if $\pi_{0}\Map_{\Algk^{\ast}}(R, k\oplus k)=\pi_{0}\Map_{\EE-\Algk^{\ast}}(R, k\oplus k)=\ast.$

\end{definition}
\begin{lemma}\label{fund}
Let $X$ be a pointed connected simplicial set, and let $R\in\EE-\Algk^{\ast}$ be connected (cofibrant). Then the induced map by the forgetful functor 
$$\Map_{\EE-\Algk^{\ast}}(R,C^{\ast}(X))\rightarrow \Map_{\Algk^{\ast}}(R,C^{\ast}(X))$$
 has a functorial (depending on $X$) retract in $\Ho(\sSetp)$. 
\end{lemma}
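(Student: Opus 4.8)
The plan is to reduce the statement to the loop-space retract furnished by Theorem \ref{th}, by writing the pointed connected simplicial set $X$ as a homotopy colimit of spheres and then transporting this decomposition through the cochain functor by means of Lemma \ref{commutes}. First I would fix a functorial presentation $X\simeq \hocolim_{j\in J}X_{j}$ in $\sSetp$ in which every $X_{j}$ is either the basepoint $\ast$ or a sphere $S^{m}$ ($m\geq 1$); such a presentation exists because the pointed connected simplicial sets are generated under homotopy colimits by the spheres, and one can arrange $J$ and the $X_{j}$ to depend functorially on $X$. Applying $C^{\ast}(-)$ and invoking Lemma \ref{commutes} turns this into $C^{\ast}(X)\simeq \holim_{j\in J}C^{\ast}(X_{j})$ in $\EE-\Algk^{\ast}$, and the same holds after applying the forgetful functor $U$ into $\Algk^{\ast}$. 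Since the derived mapping space out of the fixed cofibrant $R$ preserves homotopy limits, both $\Map_{\EE-\Algk^{\ast}}(R,C^{\ast}(X))$ and $\Map_{\Algk^{\ast}}(R,C^{\ast}(X))$ are recovered as homotopy limits over $J$ of the mapping spaces of the pieces, and $\alpha$ is the homotopy limit of the maps $\alpha_{j}$ attached to each $X_{j}$.

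Next I would produce a functorial retract of each $\alpha_{j}$. For $X_{j}=\ast$ the target is the unit algebra $k$, and both $\Map_{\EE-\Algk^{\ast}}(R,k)$ and $\Map_{\Algk^{\ast}}(R,k)$ are contractible, so $\alpha_{\ast}$ trivially admits a retract. For $X_{j}=S^{m}$ the key observation is that $S^{m}$ is formal and its reduced cochains square to zero, so $C^{\ast}(S^{m})$ is a trivial square-zero extension $k\oplus\widetilde{C}^{\ast}(S^{m})$ with $\widetilde{C}^{\ast}(S^{m})$ one-dimensional and concentrated in degree $m$. Because the assignment $M\mapsto \Map_{\Algk^{\ast}}(R,k\oplus M)$, and its $\EE$-analogue, is linear (excisive) in the square-zero module $M$, looping shifts the degree by one, giving natural equivalences $\Map_{\Algk^{\ast}}(R,C^{\ast}(S^{m}))\simeq \Omega\Map_{\Algk^{\ast}}(R,C^{\ast}(S^{m+1}))$ and $\Map_{\EE-\Algk^{\ast}}(R,C^{\ast}(S^{m}))\simeq \Omega\Map_{\EE-\Algk^{\ast}}(R,C^{\ast}(S^{m+1}))$, compatibly with $\alpha$. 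As $k=\mathbb{Q}$ we may take a commutative model of $C^{\ast}(S^{m+1})$, so Theorem \ref{th} applies with $S=C^{\ast}(S^{m+1})$ and produces a retract of $\Omega\Map_{\EE-\Algk^{\ast}}(R,S)\to\Omega\Map_{\Algk^{\ast}}(R,S)$, that is, precisely a retract of $\alpha_{S^{m}}$, and this retract is functorial in $S$.

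Finally I would assemble the piecewise retracts. The functoriality in the target argument asserted in Theorem \ref{th}, together with the naturality of the square-zero identifications, makes the collection $\{r_{j}\}$ into a natural transformation over the diagram $J$; taking homotopy limits then yields a map $r\colon \Map_{\Algk^{\ast}}(R,C^{\ast}(X))\to \Map_{\EE-\Algk^{\ast}}(R,C^{\ast}(X))$ with $r\circ\alpha\simeq \mathrm{id}$ in $\Ho(\sSetp)$, whose dependence on $X$ is functorial because the whole construction is. The connectedness hypothesis on $R$ (Definition \ref{connected}) enters here to pin down the basepoints and to guarantee that the components match across the homotopy limit, so that the loop-space retracts on the sphere pieces genuinely upgrade to a retract of the full mapping spaces.

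The step I expect to be the main obstacle is exactly this last assembly: Theorem \ref{th} only gives a retract after looping and only functorially in the target, so the difficulty is to choose the functorial sphere decomposition of $X$ and to strictify the family $\{r_{j}\}$ to an honest natural transformation of diagrams, in such a way that passing to the homotopy limit preserves both the relation $r\circ\alpha\simeq\mathrm{id}$ and the naturality in $X$; controlling $\pi_{0}$ through the connectedness of $R$ is the delicate point that makes the loop-level retracts glue to a space-level retract.
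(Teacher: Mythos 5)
Your strategy --- decompose $X$ into spheres, handle each sphere by delooping so that Theorem \ref{th} applies, and then reassemble over the diagram --- is genuinely different from the paper's. The paper instead checks that the two functors $\Psi(X)=\Map_{\EE-\Algk^{\ast}}(R,C^{\ast}(X))$ and $\Phi(X)=\Map_{\Algk^{\ast}}(R,C^{\ast}(X))$ satisfy the hypotheses of Jardine's representability theorem, represents them by pointed spaces $C$ and $A$, and thereby reduces the whole lemma to producing a retract of the single map $C\rightarrow A$ of connected spaces, which it extracts from the looped retract of Theorem \ref{th} plus the connectivity of $C$ and $A$. Before addressing the main problem, note that your delooping goes the wrong way: since $S^{m+1}=\Sigma S^{m}$ and $C^{\ast}$ converts the suspension pushout into a homotopy pullback over the contractible $\Map(R,k)$, one gets $\Map_{\Algk^{\ast}}(R,C^{\ast}(S^{m+1}))\simeq\Omega\Map_{\Algk^{\ast}}(R,C^{\ast}(S^{m}))$ (topologically $\Map_{\ast}(S^{m+1},Y)=\Omega\Map_{\ast}(S^{m},Y)$), not the equivalence you assert. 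This is repairable by presenting the $S^{m}$-piece as loops on the $S^{m-1}$-piece and applying Theorem \ref{th} with $S=C^{\ast}(S^{m-1})$, but it needs to be fixed.

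The genuine gap is the assembly step that you yourself flag but do not close. To obtain a retract of $\holim_{j}\Psi(X_{j})\rightarrow\holim_{j}\Phi(X_{j})$ you need the family $\{r_{j}\}$ to be a homotopy \emph{coherent} natural transformation of $J$-diagrams; a collection of retracts commuting with the structure maps only up to homotopy, i.e.\ naturality in $\Ho(\sSetp)$, does not pass to homotopy limits. Worse, Theorem \ref{th} gives functoriality only in the algebra variable $S$, whereas your retract for the $S^{m}$-piece is defined only after replacing the target $C^{\ast}(S^{m})$ by the delooped target $C^{\ast}(S^{m-1})$ (or $C^{\ast}(S^{m+1})$ in your indexing); a structure map $X_{j}\rightarrow X_{j'}$ of the sphere diagram induces a map $C^{\ast}(X_{j'})\rightarrow C^{\ast}(X_{j})$ but no corresponding map between the delooped targets, so Theorem \ref{th} provides no compatibility at all between $r_{j}$ and $r_{j'}$, not even up to homotopy. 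The connectedness of $R$ cannot rescue this: in the paper it serves a different purpose, namely to ensure the representing objects $C$ and $A$ are connected so that the retract of $\Omega C\rightarrow\Omega A$ upgrades to a retract of $C\rightarrow A$. As written, your proof does not go through; the representability argument is precisely the device the paper uses to avoid this coherence problem, and some substitute for it would have to be supplied.
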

\begin{proof}
We define two functors $\Psi, \Phi:\sSetp^{op}\rightarrow \sSetp$ as follows
\begin{itemize}
\item
$\Psi(X)=\Map_{\EE-\Algk^{\ast}}(R, C^{\ast}(X))$ and
\item $\Phi(X)=\Map_{\Algk^{\ast}}(R, C^{\ast}(X)).$
\end{itemize}
These functors verify the following properties
\begin{enumerate}
\item They send a weak equivalence $X\rightarrow Y$ to a weak equivalence since the functor $C^{\ast}(-)$ preserves weak between cofibrant objects and $\Map_{\EE-\Algk^{\ast}}(R,-)$, $\Map_{\EE-\Algk^{\ast}}(R,-)$
preserves weak equivalence between fibrant objects since $R$ is cofibrant as $\EE$-algebra and as associative algebra cf \ref{EE}. 
\item The functors $\Psi$ and $\Phi$ take homotopy limits to homotopy colimits, it follows that the mapping spaces of a model category commutes with homotopy limits in the second argument and the fact that $C^{\ast}(-)$ takes homotopy colimits to homotopy limits \ref{commutes}. Moreover the forgetful functor $U:\EE-\Algk^{\ast}\rightarrow \Algk^{\ast}$ commutes with homotopy limites.  
\item $\Psi(\ast)$ and $\Phi(\ast)$ are contractible since $k$ is a terminal object in $\EE-\Algk^{\ast}$ and $\Algk^{\ast}.$
\end{enumerate}
It follows form \cite[Theorem 16]{jardine2009representability}, that $\Psi(-)$ and $\Phi(-)$ are representable i.e., there exists two simplicial sets  $C$ and $A$ such that 
$\Psi(-)\simeq \Map_{\sSetp}(-,C)$ and $\Phi(-)\simeq \Map_{\sSetp}(-,A)$ in $\Ho(\sSetp)$ the natural transformation $\Psi(-)\rightarrow \Phi(-)$ is represented by a map $C\rightarrow A$.
By theorem \ref{th}, we know that the map $\Omega\Psi(-)\rightarrow \Omega\Phi(-)$ has a functorial retract (in $\Ho(\sSetp)$), it follows that the map $\Omega\Map_{\sSetp}(-,C)\rightarrow \Omega\Map_{\sSetp}(-,A)$ has a functorial retract, it implies that $\Omega C\rightarrow \Omega A$ has a retract. On another hand $R$ is connected, it follows that $A$ and $C$ are connected. hence, the induced map $A\rightarrow C$ has a retract  in $\Ho(\sSet_{\ast})$. We conclude that 
$\Map_{\EE-\Algk^{\ast}}(R,C^{\ast}(X))\rightarrow \Map_{\Algk^{\ast}}(R,C^{\ast}(X))$ has a functorial retract in $\Ho(\sSetp)$ for any simplicial set $X$. 
\end{proof}
\begin{corollary}\label{groups}
For any connected augmented $\EE$-differential graded algebra, and any pointed simplicial set $X$, the natural map 
$$\Map_{\EE-\Algk^{\ast}}(R,C^{\ast}(X))\rightarrow \Map_{\Algk^{\ast}}(R,C^{\ast}(X))$$
induces an injective map on homotopy groups. 
\end{corollary}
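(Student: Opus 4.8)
The plan is to deduce this as a purely formal consequence of Lemma \ref{fund}, whose entire content is that the comparison map in question admits a functorial retract in $\Ho(\sSetp)$; injectivity on homotopy groups is then the standard fact that a split monomorphism in a homotopy category is seen as such by any homotopy-group functor. The only hypothesis that must be reconciled is cofibrancy: Lemma \ref{fund} assumes $R$ connected \emph{and} cofibrant, whereas here $R$ is only assumed connected. So first I would fix an arbitrary connected augmented $\EE$-differential graded algebra $R$ and choose a cofibrant replacement $q:R^{c}\xrightarrow{\sim}R$ in $\EE-\Algk^{\ast}$. Since the derived mapping spaces $\Map_{\EE-\Algk^{\ast}}(-,C^{\ast}(X))$ and $\Map_{\Algk^{\ast}}(-,C^{\ast}(X))$ are invariant under weak equivalences in the first variable — and $U$ carries $R^{c}$ to a cofibrant associative algebra by Notation \ref{EE}, so the associative mapping space is computed correctly as well — precomposition with $q$ identifies the comparison map for $R$ with that for $R^{c}$ in $\Ho(\sSetp)$. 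Connectedness in the sense of Definition \ref{connected} is itself a statement about $\pi_{0}$ of derived mapping spaces, hence is preserved by $q$, so $R^{c}$ is again connected. Thus it suffices to treat cofibrant $R$.

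Next I would apply Lemma \ref{fund} directly to the connected cofibrant $R$ to obtain, for the map
$$u:\ \Map_{\EE-\Algk^{\ast}}(R,C^{\ast}(X))\longrightarrow \Map_{\Algk^{\ast}}(R,C^{\ast}(X)),$$
a morphism $r$ in $\Ho(\sSetp)$ with $r\circ u=\mathrm{id}$. Applying the pointed homotopy-group functors $\pi_{n}(-)$, which factor through $\Ho(\sSetp)$, to this relation yields $\pi_{n}(r)\circ\pi_{n}(u)=\mathrm{id}$ for every $n\ge 0$, so each $\pi_{n}(u)$ is a split injection. Because the retract furnished by Lemma \ref{fund} is functorial in $X$, these injections are natural in $X$ as well, which is what the corollary asserts.

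The main point to check — and the only step that is not completely formal — is that the retract genuinely lives in the \emph{pointed} homotopy category with the correct basepoint, so that it descends to the based homotopy groups rather than merely to $\pi_{0}$ or to an unpointed statement. This is already guaranteed by the representability argument inside the proof of Lemma \ref{fund}, which produces the retract at the level of the representing objects $A\to C$ and hence respects the basepoint coming from the augmentation. Consequently no genuine obstacle remains beyond what Lemma \ref{fund} establishes; the corollary is immediate once the cofibrant-replacement reduction is in place.
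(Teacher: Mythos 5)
Your proposal is correct and matches the paper's (implicit) argument: the corollary is meant to follow immediately from Lemma \ref{fund}, since a retract in $\Ho(\sSetp)$ yields split injections on all homotopy groups. Your additional care about reducing to a cofibrant replacement and about the basepoint is a reasonable tightening of details the paper leaves unstated, but it is not a different route.
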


\begin{theorem}\label{tit}
Let $R$ be a connected augmented $\EE$-differential graded algebra, with augmentation $\nu:R\rightarrow k$. Let $X$ be any pointed simplicial set, let $f:R\rightarrow C^{\ast}(X)$ be any map of augmented $\EE$-differential graded algebras. Then the induced map by the forgetful functor 
$$\alpha:~ \Map_{\EE-\Algk}(R,C^{\ast}(X))\rightarrow \Map_{\Algk}(R,C^{\ast}(X))$$
has a functorial retract (on the variable $X$), in particular  $\forall~f, \forall ~n> 0$: 
\begin{itemize}
\item
$\pi_{0}\alpha:\pi_{0} \Map_{\EE-\Algk}(R,C^{\ast}(X))\rightarrow \pi_{0} \Map_{\Algk}(R,C^{\ast}(X))$ and 
\item
$ \pi_{n}\alpha:\pi_{n} \Map_{\EE-\Algk}(R,C^{\ast}(X))_{f}\rightarrow \pi_{n} \Map_{\Algk}(R,C^{\ast}(X))_{f} ~$
\end{itemize}
are injective maps
\end{theorem}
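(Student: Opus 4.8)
The plan is to reduce the non-augmented statement to the augmented one already established in Corollary \ref{groups} and Lemma \ref{fund}, exploiting the basepoint of $X$. Write $\epsilon_{X}:C^{\ast}(X)\to k$ for the augmentation induced by $\ast\to X$ (so that $C^{\ast}(X)$ is an augmented algebra), and let $s_{X}:k\to C^{\ast}(X)$ be the section induced by $X\to\ast$; this $s_{X}$ is a map of $\EE$-algebras with $\epsilon_{X}s_{X}=\mathrm{id}$, which we may take $\epsilon_{X}$ to be a fibration after fibrant replacement. Post-composition with $\epsilon_{X}$ and $s_{X}$ then produces, compatibly in the $\EE$- and associative settings (since $\epsilon_{X}$ and $s_{X}$ are $\EE$-maps and hence commute with the forgetful functor), a split (homotopy) fibration sequence, functorial in $X$,
$$\Map_{\EE-\Algk^{\ast}}(R,C^{\ast}(X))\to \Map_{\EE-\Algk}(R,C^{\ast}(X))\xrightarrow{q_{X}} \Map_{\EE-\Algk}(R,k),$$
whose fibre over the augmentation $\nu$ is the augmented mapping space and whose section is induced by $s_{X}$; the same holds with $\EE-\Algk$ replaced by $\Algk$, and $\alpha$ carries one sequence to the other respecting fibres, total spaces, sections and bases.

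First I would treat the base. The base $\Map_{\EE-\Algk}(R,k)$ does not depend on $X$, and by the adjunction $\underline{U}\dashv(-\oplus k)$ of the diagram above it is naturally identified with $\Map_{\EE-\Algk^{\ast}}(R,k\oplus k)$. Since $k\oplus k\cong C^{\ast}(S^{0})$ as augmented $\EE$-algebras (cochains on the two-point pointed simplicial set), Lemma \ref{fund} applied to the pointed set $S^{0}$ furnishes a retract $r_{B}$ of $\Map_{\EE-\Algk}(R,k)\to\Map_{\Algk}(R,k)$ in $\Ho(\sSetp)$, compatibly with these identifications (the forgetful functor commutes with $-\oplus k$, so $\alpha$ on the base is exactly the augmented map for the target $k\oplus k$). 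Because $R$ is connected (Definition \ref{connected}) this base has trivial $\pi_{0}$, which is precisely what makes the component bookkeeping in the final step work.

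Next I would treat the fibre, which is the augmented mapping space itself, so Corollary \ref{groups} (respectively Lemma \ref{fund}) gives a retract $r_{F}$ of $\Map_{\EE-\Algk^{\ast}}(R,C^{\ast}(X))\to\Map_{\Algk^{\ast}}(R,C^{\ast}(X))$, functorial in $X$.

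Finally I would assemble $r_{B}$ and $r_{F}$ into a retract of $\alpha$ on total spaces. Using the sections $\sigma_{X}$ to split $q_{X}$ and its associative analogue, the long exact sequences of the two fibrations split compatibly with $\alpha$, giving natural isomorphisms on homotopy from the fibre and base pieces; combining $r_{F}$ and $r_{B}$ then yields the retract and, in particular, injectivity of $\pi_{0}\alpha$ and of each $\pi_{n}\alpha$ at every basepoint $f$, where $\pi_{0}(\text{base})=\ast$ is used to settle the $\pi_{0}$ and $\pi_{1}$ statements. I expect the genuine difficulty to lie in upgrading this homotopy-group splitting to an honest functorial retract of spaces in $\Ho(\sSet)$: the section splits $q_{X}$ but does not make it a product, so the assembly of $r_{B}$ and $r_{F}$ into a single natural map $\Map_{\Algk}(R,C^{\ast}(X))\to\Map_{\EE-\Algk}(R,C^{\ast}(X))$ must be carried out with care for the $\pi_{1}$-action of the base on the fibre, and it is exactly here that the connectedness of $R$ and the functoriality of $s_{X}$ are used most essentially.
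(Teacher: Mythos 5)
There is a genuine gap, and it is exactly the one you flag at the end: you never construct the retract of $\alpha$ on total spaces, and that retract is the actual assertion of the theorem (the homotopy-group statements are only the ``in particular''). Having retracts $r_F$ and $r_B$ on the fibre and base of a split fibration does not produce a retract on the total space, since a split fibration is not a product; moreover the long-exact-sequence argument you sketch is itself incomplete at the bottom of the sequence. With $\pi_0(B)=\ast$ one gets $\pi_0(E)\cong\pi_0(F)/\pi_1(B)$, so injectivity of $\pi_0\alpha$ requires comparing the $\pi_1(B)$-orbit structure on $\pi_0(F)$ with the $\pi_1(B')$-orbit structure on $\pi_0(F')$; since $r_B$ only makes $\pi_1(B)\to\pi_1(B')$ split injective (not surjective) and $r_F$ need not intertwine the two actions, the $\pi_0$ claim (and the $\pi_1$ claim, which involves a semidirect product) is not settled by what you wrote. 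So both the headline retract and part of the ``in particular'' remain open in your argument.

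The paper's proof avoids the fibration sequence entirely. It observes that $C^{\ast}(X_{+})\simeq C^{\ast}(X)\oplus k$ for the pointed simplicial set $X_{+}=X\coprod\ast$, so by the $(-\oplus k)$ adjunction
$$\Map_{\EE-\Algk}(R,C^{\ast}(X))\simeq \Map_{\EE-\Algk^{\ast}}(R,C^{\ast}(X_{+})),\qquad \Map_{\Algk}(R,C^{\ast}(X))\simeq \Map_{\Algk^{\ast}}(R,C^{\ast}(X_{+})),$$
compatibly with the forgetful functor. The non-augmented statement is therefore literally an instance of the augmented one, namely Lemma \ref{fund} applied to the pointed simplicial set $X_{+}$, which already provides the functorial retract in $\Ho(\sSetp)$ and hence all the injectivity claims at once. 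If you want to salvage your approach, the cleanest fix is to notice that your total space, fibre and base are the augmented mapping spaces for $X_{+}$, $X$ and $S^{0}=\emptyset_{+}$ respectively, and then apply Lemma \ref{fund} directly to $X_{+}$ rather than trying to reassemble it from the other two.
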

\begin{proof}
First of all, notice that we have an obvious cofiber sequence of pointed simplicial sets 
$$\xymatrix{S^{0}\ar[r]^{i} & X_{+}\ar[r]^{p} & X}$$
where $X_{+}$ is the pointed simplicial set $X\coprod {\ast}$. It is enough to notice that 
$$\Map_{\EE-\Algk^{\ast}}(R,C^{\ast}(X_{+}))\simeq \Map_{\EE-\Algk}(R,C^{\ast}(X))$$
 and 
$$\Map_{\Algk^{\ast}}(R,C^{\ast}(X_{+}))\simeq \Map_{\Algk}(R,C^{\ast}(X)),$$
then the result follows from \ref{fund}. 

\end{proof}

\section{Main theorems and applications}
\begin{proposition}
Suppose that $k=\mathbb{Q}$. Let $R$ be an augmented $\EE$-differential graded $k$-algebra of finite type (i.e $\textrm{dim}_{k}\coh^{i}(R)<\infty~ \forall~ i$) such that $\coh^{0}(R)=k$ and $\coh^{1}(R)=0$  then $R$ is connected in the sense of \ref{connected}.
\end{proposition}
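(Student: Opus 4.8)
The plan is to recognise the target $k\oplus k$ as the square-zero (dual numbers) extension of $k$ by the one-dimensional module $k$ concentrated in degree $0$, and to read off $\pi_{0}$ of the two mapping spaces as degree-$0$ derivation cohomology. Concretely, after replacing $R$ by a cofibrant model, an augmented map $\phi\colon R\to k\oplus k$ is automatically over the augmentation $\nu\colon R\to k$ (compose with the projection of $k\oplus k$), so it has the form $\phi=\nu+D$ with $D\colon R\to k$ a $\nu$-derivation of degree $0$: the square-zero condition kills the quadratic term, so multiplicativity of $\phi$ is exactly the derivation identity. The chain-map condition makes $D$ a cocycle, and a homotopy between two such maps is precisely a degree $-1$ derivation whose differential realises their difference. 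Hence in each setting
\[
\pi_{0}\Map_{\EE-\Algk^{\ast}}(R,k\oplus k)\cong\coh^{0}\bigl(\Derive(R,k)\bigr),\qquad
\pi_{0}\Map_{\Algk^{\ast}}(R,k\oplus k)\cong\coh^{0}\bigl(\Derive(R,k)\bigr),
\]
the derivation complexes being computed in the commutative/$\EE$ theory, respectively in the associative theory (these are the degree-$0$ Andr\'e--Quillen, respectively associative cotangent, cohomology groups of $R$ with coefficients in the augmentation module $k$). So it suffices to prove that both groups vanish.

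Next I would compute these derivation cohomologies with minimal models, which is legitimate because $\coh^{0}(\Derive(-,k))$ is an invariant of the weak equivalence class and $R$ is of finite type. Since $k=\mathbb{Q}$ we have $\EE-\Algk\simeq\CAlgk$, so in the commutative/$\EE$ case I may take a Sullivan minimal model $(\Lambda V,d)$ for $R$. A degree-$j$ $\nu$-derivation $\Lambda V\to k$ is determined by a linear map $V^{-j}\to k$, so $\Derive^{j}(\Lambda V,k)\cong (V^{-j})^{\vee}$; because $d$ is decomposable and $k$ is the augmentation module, every derivation vanishes on decomposables, so the induced differential on $\Derive(\Lambda V,k)$ is zero and $\coh^{0}(\Derive(\Lambda V,k))\cong (V^{0})^{\vee}$. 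In the associative case the identical computation applies to a minimal tensor-algebra model $(TW,d)$ with $d$ landing in decomposables, giving $\coh^{0}(\Derive(TW,k))\cong (W^{0})^{\vee}$.

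Finally I would invoke connectivity. For a connected algebra ($\coh^{0}(R)=k$) of finite type the minimal model carries no generators in degree $0$, that is $V^{0}=0$ and $W^{0}=0$; the hypothesis $\coh^{1}(R)=0$ together with finite type guarantees that these minimal models are genuinely simply connected objects of finite type, so the identifications above and the dualisations are valid. Consequently both groups reduce to $(V^{0})^{\vee}=0$ and $(W^{0})^{\vee}=0$, the only homotopy class in each mapping space is that of the augmentation, and $R$ is connected in the sense of Definition \ref{connected}. The main obstacle is the first paragraph: making the classification of maps into the square-zero extension by derived derivations precise and functorial in both the associative and the $\EE$ model structures, and in particular checking that the homotopy relation on the mapping space corresponds exactly to the coboundary relation in the derivation complex. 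Once this is in place, the vanishing of degree-$0$ generators of a connected finite-type algebra is routine.
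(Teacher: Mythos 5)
The proposal breaks down at the identification of the target object. In the paper, $k\oplus k$ is the image of $k\in\Algk$ under the right adjoint $-\oplus k$ of the functor forgetting the augmentation; concretely it is the product algebra $k\times k$ augmented by a projection, and the adjunction the paper's proof relies on, $\Map_{\Algk^{\ast}}(R,k\oplus k)\simeq\Map_{\Algk}(R,k)$, holds only for that object --- it fails for the square-zero (dual numbers) extension $k\ltimes k$, for which augmented maps classify derivations rather than algebra maps. Consequently $\pi_{0}\Map_{\Algk^{\ast}}(R,k\oplus k)$ is the set of homotopy classes of differential graded algebra maps $R\to k$ (homotopy classes of augmentations), not $\coh^{0}(\Derive(R,k))$. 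Your first paragraph therefore classifies maps into the wrong algebra, and everything downstream --- the derivation complexes, the computation $\coh^{0}(\Derive(\Lambda V,k))\cong (V^{0})^{\vee}$ --- establishes the vanishing of a tangent space at the augmentation rather than the uniqueness of the augmentation up to homotopy. Both statements happen to be true under the hypotheses, but your argument does not prove the one Definition \ref{connected} asks for.

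What is actually needed, and what the paper does, is the following. In the $\EE$/commutative setting, $\pi_{0}\Map_{\EE-\Algk}(R,k)=\ast$ comes from Sullivan theory: under $\coh^{0}(R)=k$ and $\coh^{1}(R)=0$ a minimal model has generators only in degrees $\geq 2$, so the augmentation is the unique map to $k$ up to homotopy (this part of your intuition is sound, but it must be applied to maps, not derivations). The delicate point is the \emph{associative} mapping space. The paper does not use a minimal tensor-algebra model; instead it takes $R$ cofibrant in $\EE-\Algk$ (hence cofibrant in $\Algk$ by the construction of the operad $\EE$, Notation \ref{EE}), observes that since $\coh^{0}(R)=k$ and $k$ sits in degree $0$ every strict associative map $R\to k$ is already an $\EE$-map, and that Hinich's path object realizing homotopies into $k$ is the same in $\Algk$ and $\EE-\Algk$; hence $\pi_{0}\Map_{\Algk}(R,k)=\pi_{0}\Map_{\EE-\Algk}(R,k)=\ast$. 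Any repair of your argument would have to supply this comparison of associative and commutative homotopy classes of maps to $k$, which is precisely the content that the square-zero reduction bypasses.
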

\begin{proof}
First of all, by adjunction $\Map_{\EE-\Algk^{\ast}}(R,k\oplus k)\simeq  \Map_{\EE-\Algk}(R,k)$. 
Without loosing generality we can suppose that $R$ is cofibrant as $\EE-\Algk$, hence $R$ is cofibrant as $\Algk$ (by construction of the operad $\EE$ cf \ref{EE}). By Sullivan Theorem $\pi_{0}\Map_{\EE-\Algk}(R,k)=\ast$. It follows that for any maps $\nu: R\rightarrow k$ and $\mu: R\rightarrow k$ are homotopic in $\EE-\Algk$. According to \cite{hinich1997homological}, we have a commutative diagram in $\EE-\Algk$
$$\xymatrix{
& k\\
R\ar[ur]^{\mu}\ar[dr]_{\nu}\ar[r]& P(R)\ar[u]\ar[d]\\
& k 
}$$
where $P(R)$ is a path object associated to $R$. Notice that the path object is the same for graded differential associative algebras if we consider $R\in\Algk$. Since $\coh^{0}(R)=k$ any map $R\rightarrow k$ in $\Algk$ is actually a map in $\EE-\Algk$. We conclude that 
$\pi_{0}\Map_{\Algk}(R,k)=\ast=\pi_{0}\Map_{\Algk^{\ast}}(R,k\oplus k).$

\end{proof}
\begin{theorem}[Main Theorem]\label{main}
Suppose that $k=\mathbb{Q}$. Let $f: X\rightarrow Y$ be a map of simply connected spaces of (finite type), then the forgetful functor $$U:\Map_{\EE-\Algk}(C^{\ast}(Y),C^{\ast}(X))\rightarrow \Map_{\Algk}(C^{\ast}(Y),C^{\ast}(X))$$
induces a map of $k$-vector spaces such that:
\begin{enumerate}
\item $[X,Y_{\mathbb{Q}}]=\pi_{0}\Map_{\EE-\Algk}(C^{\ast}(Y),C^{\ast}(X))\rightarrow \pi_{0}\Map_{\Algk}(C^{\ast}(Y),C^{\ast}(X))$ is injective.
\item
$\pi_{1}\Map(X,Y_{\mathbb{Q}})_{f}=\pi_{1}\Map_{\EE-\Algk}(C^{\ast}(Y),C^{\ast}(X))_{f}\rightarrow \pi_{1}\Map_{\Algk}(C^{\ast}(Y),C^{\ast}(X))_{f}$ is injective.
\item $ \forall~n>0, $ 
$$\pi_{n+1}\Map(X,Y_{\mathbb{Q}})_{f}=\pi_{n+1}\Map_{\EE-\Algk}(C^{\ast}(Y),C^{\ast}(X))_{f}=\mathrm{AQ}^{-n-1}(C^{\ast}(Y),C^{\ast}(X)_{f})$$ and the induced map
$$\pi_{n+1}\Map(X,Y_{\mathbb{Q}})_{f}\rightarrow \pi_{n+1}\Map_{\EE-\Algk}(C^{\ast}(Y),C^{\ast}(X))_{f}=\HH^{-n}(C^{\ast}(Y), C^{\ast}(X)_{f})$$
 is an injective map of $\mathbb{Q}$-vector spaces.
\item
If $X=Y$ and $f=id$, then $\pi_{1}Aut(X_{\mathbb{Q}})_{id}\rightarrow \HH^{0,\times}(C^{\ast}(X),C^{\ast}(X))$ is an injective map of abelian groups.
\end{enumerate}

\end{theorem}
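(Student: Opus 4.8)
The plan is to deduce all four statements from the abstract injectivity results of Section~1, specialized to $R=C^{\ast}(Y)$, combined with Sullivan's equivalence and with the Hochschild / Andr\'e--Quillen interpretations of the two mapping spaces. The first thing I would do is verify that $C^{\ast}(Y)$ is connected in the sense of Definition~\ref{connected}. Since $Y$ is simply connected of finite type, $\coh^{0}(C^{\ast}(Y))=k$ and $\coh^{1}(C^{\ast}(Y))=0$, so the preceding Proposition applies verbatim and gives connectedness. This is precisely the hypothesis that licenses Theorem~\ref{tit} with $R=C^{\ast}(Y)$ and the space $X$.

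Next, for each map $f:X\rightarrow Y$ I would invoke the Sullivan $\infty$-equivalence given by $C^{\ast}(-)$ on simply connected rational spaces of finite type to identify, in the homotopy category and compatibly with base points and path components,
$$\Map(X,Y_{\mathbb{Q}})_{f}\simeq \Map_{\EE-\Algk}(C^{\ast}(Y),C^{\ast}(X))_{f};$$
in particular $[X,Y_{\mathbb{Q}}]\cong\pi_{0}\Map_{\EE-\Algk}(C^{\ast}(Y),C^{\ast}(X))$. Applying Theorem~\ref{tit}, whose hypotheses were just checked, produces a functorial retract of the whole comparison map $\alpha$ on the variable $X$, hence a retract on every homotopy group, including the fundamental group. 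This yields at once the injectivity of $\pi_{0}\alpha$ (part~(1)) and of $\pi_{n}\alpha$ on each path component for all $n>0$ (parts~(2) and~(3)), with no further work on the homotopy-theoretic side.

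It then remains to translate source and target of $\pi_{n+1}\alpha$ into cohomological terms. For the target I would use equation~\eqref{eq}: with $R=C^{\ast}(Y)$, $S=C^{\ast}(X)_{f}$ and degree $n+1>1$ it gives $\pi_{n+1}\Map_{\Algk}(C^{\ast}(Y),C^{\ast}(X))_{f}\cong\HH^{-n}(C^{\ast}(Y),C^{\ast}(X)_{f})$. For the source I would cite \cite{block2005andre}, identifying $\pi_{n+1}\Map(X,Y_{\mathbb{Q}})_{f}$ with $\mathrm{AQ}^{-n-1}(C^{\ast}(Y),C^{\ast}(X)_{f})$ for $n+1>1$. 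Composing the Sullivan identification with these two computations gives the diagram of part~(3), and the injectivity of $\pi_{n+1}\alpha$ upgrades it to the asserted injection of $\mathbb{Q}$-vector spaces.

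Finally, for part~(4) I would take $X=Y$ and $f=\mathrm{id}$: the component $\Map(X,X_{\mathbb{Q}})_{id}$ is the identity component of the monoid of self-equivalences, so its $\pi_{1}$ is $\pi_{1}Aut(X_{\mathbb{Q}})_{id}$, and the map to the associative side is part~(2) rewritten. The main obstacle is the bookkeeping in exactly this lowest degree. For $n=1$ equation~\eqref{eq} does not apply, and $\pi_{1}$ of a mapping space is a (possibly non-abelian) group rather than a vector space; so one must argue separately, following \cite{amrani2013mapping}, that $\pi_{1}\Map_{\Algk}(C^{\ast}(X),C^{\ast}(X))_{id}$ is exactly the unit group $\HH^{0,\times}(C^{\ast}(X),C^{\ast}(X))$ of the Hochschild cohomology algebra $\HH^{0}$ and not all of $\HH^{0}$, and check that $\alpha$ respects this group structure. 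The injectivity itself is free from Theorem~\ref{tit}; the genuine content is this group-theoretic identification of the target together with the compatibility of the $\EE$/associative structures with the monoid structure of self-equivalences.
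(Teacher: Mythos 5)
Your proposal is correct and follows essentially the same route as the paper: connectedness of $C^{\ast}(Y)$ via the preceding proposition, Sullivan's equivalence, the retract/injectivity from Theorem~\ref{tit}, Block--Lazarev for the Andr\'e--Quillen identification of the source, and \cite{amrani2013mapping} for the Hochschild identification of the target and for the $\pi_{1}$ case. The only minor discrepancy is in part~(4): the cited result identifies $\pi_{1}\Map_{\Algk}(C^{\ast}(X),C^{\ast}(X))_{id}$ with the kernel of $\HH^{0,\times}(C^{\ast}(X),C^{\ast}(X))\rightarrow \mathbb{Q}^{\times}$ rather than with the full unit group, but either identification yields the claimed injection into $\HH^{0,\times}$.
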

\begin{proof}
By hypothesis $X$ and $Y$ are of finite type, 
we deduce by \cite{sullivan1977infinitesimal} that $$\Map_{\EE-\Algk}(C^{\ast}(Y),C^{\ast}(X))$$ is equivalent to $\Map(X,Y_{\mathbb{Q}})$, on the other hand by Theorem \ref{tit}, the forgetful functor $U:\EE-\Algk\rightarrow \Algk$ induces an injective map $$\alpha:\pi_{i}\Map_{\EE-\Algk}(C^{\ast}(Y),C^{\ast}(X))_{f}\rightarrow \pi_{i}\Map_{\Algk}(C^{\ast}(Y),C^{\ast}(X))_{f}$$ for all $i\geq 0$. Moreover if $i>1$, Block-Lazarev theorem gives us the isomorphism  $$\pi_{i}\Map_{\EE-\Algk}(C^{\ast}(Y),C^{\ast}(X))_{f}\cong \mathrm{AQ}^{-i}(C^{\ast}(Y),C^{\ast}(X)_{f}) ,$$
and by \cite{amrani2013mapping}, $$\pi_{i}\Map_{\Algk}(C^{\ast}(Y),C^{\ast}(X))_{f}\cong  \HH^{-i+1}(C^{\ast}(Y),C^{\ast}(X)_{f})$$
Hence, the induced map $\alpha$ is exactly 
 $\mathrm{AQ}^{-i}(C^{\ast}(Y),C^{\ast}(X)_{f})\rightarrow \HH^{-i+1}(C^{\ast}(Y),C^{\ast}(X)_{f})$, which is injective map of $\mathbb{Q}$-vector spaces. Applying Sullivan theorem we deduce that  $\pi_{i}\Map(X,Y_{\mathbb{Q}})_{f}\cong \mathrm{AQ}^{-i}(C^{\ast}(Y),C^{\ast}(X)_{f})$ for $i>1$ . 
 In particular, when $X=Y$ and $f=id$, $\Map(X,X)_{id}=Aut(X)_{id}$ and 
 $$\Map(X_{\mathbb{Q}},X_{\mathbb{Q}})_{id}= Aut(X_{\mathbb{Q}})_{id}.$$
Therefore, $\pi_{1}Aut(X_{\mathbb{Q}})_{id}\cong \pi_{1}\Map_{\EE-\Algk}(C^{\ast}(X),C^{\ast}(X))_{id}$. In \cite[Corollary 3.6]{amrani2013mapping}, we have shown that $\pi_{1}\Map_{\Algk}(C^{\ast}(X),C^{\ast}(X))_{id}$ is isomorphic to the kernel of the natural map of (abelian) groups $\HH^{0,\times}(C^{\ast}(X),C^{\ast}(X))\rightarrow \mathrm{H}^{0, \times}(C^{\ast}(X))=\mathbb{Q}^{\times}$. The result follows for Theorem \ref{tit}. 
\end{proof}

\begin{corollary}\label{loop}
Let $M$ be a simply connected orientable closed manifold of dimension $d$, for all $i> 0$,  we have an injective map of $\mathbb{Q}$ vector spaces 
$$\pi_{i}\Omega_{id}Aut(M)\otimes \mathbb{Q}\rightarrow \mathrm{H}_{i+d}(\mathcal{L}M,\mathbb{Q}),$$ 
where $\mathcal{L}M$ is the space of free loops on $M$, i.e., $\Map(S^{1},M)$.  
\end{corollary}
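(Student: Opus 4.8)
The plan is to deduce Corollary \ref{loop} from the main theorem by combining three ingredients: part (3) of Theorem \ref{main} applied to the case $X=Y=M$ with $f=\mathrm{id}$, a loop-space shift relating $\pi_i \Omega_{id}Aut(M)$ to $\pi_{i+1}Aut(M)_{id}$, and Poincar\'e duality for the oriented closed manifold $M$ to convert Hochschild cohomology into the homology of the free loop space. First I would observe that since $M$ is simply connected of finite type and orientable, its rationalization $M_{\mathbb{Q}}$ satisfies the hypotheses of Theorem \ref{main}, and that $\pi_i(\Omega_{id}Aut(M))\otimes\mathbb{Q}\cong \pi_{i+1}(Aut(M)_{id})\otimes\mathbb{Q}\cong \pi_{i+1}(Aut(M_{\mathbb{Q}})_{id})$ by the standard $\pi_n\Omega Z \cong \pi_{n+1}Z$ identity together with the fact that rationalization is compatible with the relevant function-space and component-wise constructions.

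Next I would apply Theorem \ref{main}(3) with $X=Y=M$ and $f=\mathrm{id}$. That gives, for every $i>0$, an injective map of $\mathbb{Q}$-vector spaces
\begin{equation*}
\pi_{i+1}\Map(M,M_{\mathbb{Q}})_{id}\longrightarrow \HH^{-i}(C^{\ast}(M),C^{\ast}(M)_{id}).
\end{equation*}
Since $\Map(M,M_{\mathbb{Q}})_{id}=Aut(M_{\mathbb{Q}})_{id}$, the left-hand side is exactly $\pi_{i+1}Aut(M_{\mathbb{Q}})_{id}\cong \pi_i\Omega_{id}Aut(M)\otimes\mathbb{Q}$ by the previous step. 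So what remains is to identify the target $\HH^{-i}(C^{\ast}(M),C^{\ast}(M)_{id})$ with the homology group $\mathrm{H}_{i+d}(\mathcal{L}M,\mathbb{Q})$ and to check that the composite map is still injective (which it is, being the composite of an injection with an isomorphism).

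The identification of the target is the heart of the argument and relies on two facts. The first is the cyclic-bar / Hochschild model of the free loop space: for a simply connected space $M$ the rational homology of $\mathcal{L}M=\Map(S^1,M)$ is computed by the Hochschild homology of the cochain algebra, $\mathrm{H}_{\ast}(\mathcal{L}M,\mathbb{Q})\cong \mathrm{HH}_{\ast}(C^{\ast}(M),C^{\ast}(M))$, where $C^{\ast}(M)$ acts on itself through the identity (this is the rational version of the Jones isomorphism, using that $S^1$ is the circle and $C^{\ast}(-)$ sends the cyclic-bar diagram computing $\mathcal{L}M$ to the Hochschild complex). The second is Poincar\'e duality: because $M$ is an oriented closed $d$-manifold, $C^{\ast}(M)$ is a Poincar\'e duality algebra of formal dimension $d$, and this duality induces an isomorphism between Hochschild cohomology and a degree-shifted Hochschild homology,
\begin{equation*}
\HH^{-i}(C^{\ast}(M),C^{\ast}(M))\cong \mathrm{HH}_{i+d}(C^{\ast}(M),C^{\ast}(M)).
\end{equation*}
Composing these two isomorphisms yields $\HH^{-i}(C^{\ast}(M),C^{\ast}(M)_{id})\cong \mathrm{H}_{i+d}(\mathcal{L}M,\mathbb{Q})$, and substituting this into the injection above gives the desired map.

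The step I expect to be the main obstacle is the careful bookkeeping of the degree conventions and the duality shift. One must reconcile the paper's convention that cohomology is negatively graded and that $\HH^{\ast}$ denotes the \emph{negative} Hochschild cohomology with the grading in which the Jones isomorphism and Poincar\'e duality are usually stated; getting the shift by exactly $d$ (rather than $\pm d$ or $d\mp i$) correct requires tracking how the fundamental class of $M$ acts and how the loop-space shift interacts with it. A secondary technical point is justifying that the Poincar\'e-duality isomorphism between $\HH^{\ast}$ and $\mathrm{HH}_{\ast}$ holds at the level of the $C^\ast(M)$-bimodule structure induced by $f=\mathrm{id}$ (so that the bimodule coefficients match on both sides), which follows from the self-duality of $C^{\ast}(M)$ as a bimodule but should be stated explicitly. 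Once the gradings are pinned down, the injectivity and the remaining identifications are formal.
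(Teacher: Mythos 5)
Your proposal follows essentially the same route as the paper: the paper's proof of this corollary is exactly Theorem \ref{main}(3) applied to $X=Y=M$ and $f=\mathrm{id}$ (with the loop-space shift $\pi_{i}\Omega_{id}Aut(M)\otimes\mathbb{Q}\cong\pi_{i+1}Aut(M_{\mathbb{Q}})_{id}$), combined with the Cohen--Jones identification $\mathrm{HH}^{\ast}(C^{\ast}(M),C^{\ast}(M))\cong \mathrm{H}_{\ast+d}(\mathcal{L}M,\mathbb{Q})$, which the paper simply cites rather than re-deriving via the Jones isomorphism plus Poincar\'e duality as you sketch. The one imprecision in your expansion is that the Jones isomorphism identifies $\mathrm{HH}_{\ast}(C^{\ast}(M),C^{\ast}(M))$ with the \emph{cohomology} of $\mathcal{L}M$ rather than its homology, but since $\mathcal{L}M$ has finite-type rational (co)homology in each degree for $M$ simply connected of finite type, this does not affect the stated conclusion.
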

\begin{proof}
Since $M$ is a finite CW-complex, it is a direct consequence of Theorem \ref{main}, and the fact that \\
 $\HH^{\ast}(C^{\ast}(M),C^{\ast}(M))\cong \mathrm{H}_{\ast+d}(\mathcal{L}M,\mathbb{Q})$ \cite{cohen2002homotopy}.
\end{proof}
\begin{remark}
Corollary \ref{loop} was also proven in \cite[Theorem 2 (1)]{felix2004monoid} using a different method.  
\end{remark}
\subsection{Hodge filtration on Hochschild cohomology over a field of characteristic zero } In our main Theorem \ref{main}, we have identified the higher homotopy groups of $\Map(X,Y_{\mathbb{Q}})_{f}$ based at some continuous map $f:X\rightarrow Y$ as a sub $\mathbb{Q}$-vector space of the (negative) Hochschild cohomology. According to \cite[Theorem 3.1]{ginot2010hochschild}, there exists a Hodge decomposition on the Hochschild cohomology $\HH^{\ast}(R,S)$ for any differential graded $\mathbb{Q}$-algebra $R$ and any differential graded $R$-bimodule $S$.  More precisely Ginot has proved in \cite{ginot2010hochschild}, the following formula in the rational case:
$$\mathrm{HH}^{\ast}(R,S)\cong \prod_{n\geq 0} \mathrm{HH}^{\ast}_{(n)}(R,S),$$
where the $\mathbb{Q}$-vector spaces $ \mathrm{HH}^{\ast}_{(n)}(R,S)$ are eigenspaces for an iterated power of some operator.  

\begin{theorem}\label{main2}
With the same assemption as in Theorem \ref{main}, we have the following isomorphism
$$\pi_{n+1}\Map(X,Y_{\mathbb{Q}})_{f}\cong  \mathrm{HH}^{-n}_{(1)}(C^{\ast}(Y),C^{\ast}(X)_{f}), ~\forall~n>0, \forall ~f.$$
\end{theorem}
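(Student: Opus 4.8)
```latex
The plan is to combine the injectivity result of Theorem \ref{main}(3) with the structure of the Hodge decomposition provided by Ginot's theorem in \cite{ginot2010hochschild}. The key observation is that the injective map $\alpha: \mathrm{AQ}^{-n-1}(C^{\ast}(Y),C^{\ast}(X)_{f}) \rightarrow \HH^{-n}(C^{\ast}(Y),C^{\ast}(X)_{f})$ identified in the proof of Theorem \ref{main} is not merely an abstract inclusion of vector spaces, but lands in a specific piece of the Hodge filtration. First I would recall that the Andr\'e--Quillen cohomology is precisely the Harrison (or ``commutative'') part of Hochschild cohomology, which corresponds exactly to the weight-one eigenspace $\HH^{\ast}_{(1)}$ in the Hodge (lambda-operation) decomposition. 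This is the classical fact, in the rational setting, that $\mathrm{AQ}^{\ast} \cong \HH^{\ast}_{(1)}$ as a summand.

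The main steps I would carry out in order are as follows. First, invoke Theorem \ref{main}(3) to establish that $\pi_{n+1}\Map(X,Y_{\mathbb{Q}})_{f} \cong \mathrm{AQ}^{-n-1}(C^{\ast}(Y),C^{\ast}(X)_{f})$ via Block--Lazarev, and that the forgetful functor induces an injection of this into $\HH^{-n}(C^{\ast}(Y),C^{\ast}(X)_{f})$. Second, recall Ginot's rational decomposition $\HH^{\ast}(R,S) \cong \prod_{n\geq 0}\HH^{\ast}_{(n)}(R,S)$ and identify the summand $\HH^{\ast}_{(1)}$ with the image of the Harrison/Andr\'e--Quillen cohomology. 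Third, verify that the image of the injective map $\alpha$ is exactly the component $\HH^{-n}_{(1)}$, so that the composite $\pi_{n+1}\Map(X,Y_{\mathbb{Q}})_{f} \to \HH^{-n}_{(1)}(C^{\ast}(Y),C^{\ast}(X)_{f})$ is an isomorphism rather than merely an injection. The degree shift must be tracked carefully: the source $\mathrm{AQ}^{-n-1}$ maps to $\HH^{-n}$ because the forgetful functor shifts by one, consistent with the comparison between equations \eqref{eq} and the Block--Lazarev identification, and the weight-one piece sits in cohomological degree $-n$.

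The hard part will be establishing that the image is \emph{precisely} $\HH^{-n}_{(1)}$ and not something larger inside the product decomposition. This requires knowing that the comparison map between commutative (i.e., $\EE$) and associative structures, at the level of the cotangent complex, realizes the first Hodge summand functorially and compatibly with the identifications of Theorem \ref{th}. Concretely, I would argue that the retraction constructed in Theorem \ref{tit} is compatible with the $\lambda$-operations defining the Hodge grading, so that the retract projects onto the weight-one component. The subtlety lies in the fact that the Hodge decomposition is defined via eigenspaces of an iterated power of an operator (the shuffle/Eulerian idempotent), and one must check that the functorial retract from \ref{th} agrees with the Eulerian idempotent projecting onto weight one. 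Once this compatibility is secured, the injection becomes a split surjection onto $\HH^{-n}_{(1)}$ and hence the claimed isomorphism follows for all $n>0$ and all $f$.
```
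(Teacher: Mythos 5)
Your core chain of identifications is exactly the paper's proof: Block--Lazarev gives $\pi_{n+1}\Map(X,Y_{\mathbb{Q}})_{f}\cong \mathrm{AQ}^{-n-1}(C^{\ast}(Y),C^{\ast}(X)_{f})$, the characteristic-zero identity $\mathrm{AQ}^{n-1}=\mathrm{Harr}^{n}$ gives $\mathrm{AQ}^{-n-1}\cong\mathrm{Harr}^{-n}$, and Ginot's theorem identifies $\mathrm{Harr}^{-n}$ with $\mathrm{HH}^{-n}_{(1)}$. That composite of abstract isomorphisms already proves the statement as written, and it is all the paper does.

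Where you diverge is in treating as essential the further claim that the image of the specific injection $\alpha$ from Theorem \ref{main}(3) is \emph{precisely} the weight-one summand, and you correctly flag that this would require checking compatibility of the forgetful-functor comparison map with the Eulerian idempotents defining the Hodge grading. Be aware that this step is not needed for the isomorphism $\pi_{n+1}\Map(X,Y_{\mathbb{Q}})_{f}\cong\mathrm{HH}^{-n}_{(1)}$ itself, and the paper does not carry it out; it would only be needed to upgrade the result to the assertion (made informally in the abstract) that $\alpha$ itself lands onto the weight-one piece inside the full Hochschild cohomology. So your proposal proves the stated theorem by the same route as the paper, but the ``hard part'' you isolate is an additional, genuinely unproven refinement rather than a gap in the proof of the statement at hand; if you do want that refinement, you would indeed have to verify that the natural map from Andr\'e--Quillen (equivalently Harrison) cohomology into Hochschild cohomology induced by forgetting commutativity coincides with the inclusion of the $\lambda$-eigenspace, which is a known but nontrivial compatibility that neither you nor the paper establishes here.
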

\begin{proof}
First of all, we notice that $\pi_{n}\Map(X,Y_{\mathbb{Q}})_{f}\cong \mathrm{AQ}^{-n}(C^{\ast}(Y),C^{\ast}(X)_{f})$ for all $n>1$ (cf \cite{block2005andre}), where $\mathrm{AQ}^{\ast}$ is the Andr\'e-Quillen cohomology. On another hand $ \mathrm{HH}^{-n}_{(1)}(C^{\ast}(Y),C^{\ast}(X)_{f})=\mathrm{Harr}^{-n}(C^{\ast}(Y),C^{\ast}(X)_{f})$, where $\mathrm{Harr}^{\ast}$ is the Harrison cohomology, cf  \cite[Theorem 3.1]{ginot2010hochschild}. Since we work in characteristic zero, Harrison cohomology and Andr\'e-Quillen cohomology agree up to a shift, more precisely $\mathrm{AQ}^{n-1}=\mathrm{Harr}^{n}$. It follows that 

\begin{eqnarray*}
\pi_{n+1}\Map(X,Y_{\mathbb{Q}})_{f}&\cong& \mathrm{AQ}^{-n-1}(C^{\ast}(Y),C^{\ast}(X)_{f})\\
&\cong& \mathrm{Harr}^{-n}(C^{\ast}(Y),C^{\ast}(X)_{f})\\
&\cong&  \mathrm{HH}^{-n}_{(1)}(C^{\ast}(Y),C^{\ast}(X)_{f}),~ \forall~n>0, \forall ~f. 
\end{eqnarray*}
\end{proof}
\begin{remark}
Theorem \ref{main2} is a generalization of \cite[Theorem 2 (2)]{felix2004monoid}.
\end{remark}

\section*{Appendix}
There is a class of model categories called simplicial model categories \cite{goerss1999}, roughly speaking a simplicial model category is tensored, cotensored and enriched over the model category of simplicial sets in a compatible way (adjunction compatibility, and model structure compatibility). In general a model category $\C$ do not need to be simplicial model category. Moreover, a Quillen adjunction between simplicial model categories 
$$\xymatrix{ \C \ar@<2pt>[r]^{F} & \D, \ar@<2pt>[l]^{U}  }$$ 
is not a simplicial adjunction in general. 
In \cite[Chapter 5, 6 ]{Hovey}, Hovey introduced a notion of module category. We will need a more richer structure and we will call it \textbf{enriched module structure}. In the classical context any ordinary category with product and coproduct is an enriched $\Set$-module. More precisely, 
suppose that $\D$ is an ordinary category with products and coproducts, we can define the following functors:
\begin{enumerate}
\item
$-\otimes-:~\Set\times \D\rightarrow \D$ such that for any set $X$ and any object $D\in \D$ we have $X\otimes D=\coprod_{i\in X} D$.
\item
$A(-,-):~\Set^{op}\times \D\rightarrow \D$ such that for any set $X$ and for any $D\in \D$ we define $A(X,D)=\prod_{i\in X}D$.
\end{enumerate}
\begin{definition}\label{def1}
An enriched $\Set$-module $\D$ is a category with all products and coproducts such that we have natural isomorphism for any $X, Y\in \Set$ and any $C, D\in \D$
\begin{itemize}
\item $(X\times Y)\otimes D\cong X\otimes (Y\otimes D)$.
\item $\hom_{\D}(C,A(X,D))\cong \hom_{\D}(X\otimes C,D)$.
\item $\hom_{\Set}(X,\hom_{\D}(C,D))\cong \hom_{\D}(X\otimes C, D)\cong \hom_{\D}( C, A(X,D)).$
\item $\ast\otimes D\cong D$. 
\end{itemize}
\end{definition}

A simplicial category $\D$ in the sense of  \cite{goerss1999} is an enriched $\sSet$-module in the sense of \ref{def1}, where we replace $\hom_{\D}$ by the natural enrichment of $\D$ denoted by $\Map_{\D}$ (simplicial set). 
\begin{theorem}\label{enrich}
Let $\D$ be any (pointed) model category, then the homotopy category $\Ho(\D)$ is an enriched $\Ho(\sSet)$-module (enriched $\Ho(\sSetp)$-module).
\end{theorem}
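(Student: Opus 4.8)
The plan is to realize Theorem \ref{enrich} as a repackaging of Hovey's theory of \emph{framings} (cosimplicial and simplicial resolutions) for an arbitrary model category, developed in \cite[Chapter~5]{Hovey}. Although a general model category $\D$ carries no strict simplicial structure, every object admits a cosimplicial resolution $X^{\bullet}$ and a simplicial resolution $X_{\bullet}$ which are homotopically well behaved, and these allow one to define a tensor and a cotensor by simplicial sets that descend to $\Ho(\D)$ and satisfy exactly the axioms of Definition \ref{def1}. So the whole content is to transport Hovey's framing machinery into the language of an enriched $\Ho(\sSet)$-module.

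First I would recall that for every object $X\in\D$ there is a cosimplicial frame $X^{\bullet}$ (a Reedy cofibrant cosimplicial object, homotopically constant, with $X^{0}\simeq X$) and dually a simplicial frame $X_{\bullet}$, each functorial up to homotopy \cite[\S5.2]{Hovey}. For a simplicial set $K$ I then define the tensor $K\otimes X$ as the coend $K\otimes_{\Delta}X^{\bullet}$ and the cotensor $A(K,X):=X^{K}$ as the corresponding end built from $X_{\bullet}$. Passing to cofibrant–fibrant representatives, these give bifunctors $-\otimes-:\Ho(\sSet)\times\Ho(\D)\to\Ho(\D)$ and $A(-,-):\Ho(\sSet)^{op}\times\Ho(\D)\to\Ho(\D)$ which are, up to natural isomorphism, independent of the chosen frames.

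Next I would verify the four isomorphisms of Definition \ref{def1} at the homotopy level. The associativity $(K\times L)\otimes X\cong K\otimes(L\otimes X)$ and the unit $\ast\otimes X\cong X$ are part of the coherence of framings established in \cite[\S5.4--5.7]{Hovey}. The defining adjunctions, namely the natural isomorphisms $\Map_{\D}(K\otimes C,D)\cong\Map_{\sSet}(K,\Map_{\D}(C,D))\cong\Map_{\D}(C,A(K,D))$ in $\Ho(\sSet)$, are precisely the statement that the homotopy function complex $\Map_{\D}(-,-)$ (built from the same frames) is the internal enrichment and that $\otimes$ and $A$ are adjoint on each side; this is Hovey's theorem that $\Ho(\D)$ is a closed $\Ho(\sSet)$-module. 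Finally, restricting $K$ to a discrete simplicial set recovers $K\otimes X\cong\coprod_{K}X$ and $A(K,X)\cong\prod_{K}X$, so the homotopy products and coproducts exist in $\Ho(\D)$ (they do, since $\D$ is complete and cocomplete) and match the underlying $\Set$-module structure. The pointed case is identical after replacing the frames by their pointed analogues and the cartesian product of spaces by the smash product, yielding the $\Ho(\sSetp)$-module structure.

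The hard part will be the \emph{coherence and well-definedness at the homotopy level}: frames are only functorial and unique up to homotopy, so checking that $\otimes$ and $A$ are genuine bifunctors on $\Ho(\D)$, and that the associativity and adjunction isomorphisms are natural and satisfy the module coherence diagrams, requires the careful homotopical bookkeeping carried out in \cite[\S5.3--5.7]{Hovey}. Once those naturality statements are in hand, the verification of the four isomorphisms listed in Definition \ref{def1} is purely formal.
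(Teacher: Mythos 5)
Your proposal is correct and follows essentially the same route as the paper, which simply defers the proof to Hovey's framing machinery in \cite[Chapter 5]{Hovey}; you have just spelled out the deduction (cosimplicial/simplicial frames defining the tensor and cotensor, Hovey's theorem that $\Ho(\D)$ is a closed $\Ho(\sSet)$-module, and the pointed variant via smash products) that the paper leaves implicit.
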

\begin{proposition}
Given any Quillen adjunction between model categories 
$$\xymatrix{ \C \ar@<2pt>[r]^{F} & \D, \ar@<2pt>[l]^{U}  }$$ 
it induces the following isomorphisms :
\begin{itemize}
\item $\Map_{\C}(X,U(Y))\cong\Map_{\D}(F(X),Y)$ in $\Ho(\sSet)$
\item  $ F(X\otimes C)\cong X\otimes F(C)$  in $\Ho(\D)$ for any $X\in\sSet$ and any $C\in \C$. 
\item $U (A(X,D))\cong A(X,UD)$ in $\Ho(\C)$ for all $X\in\sSet$ and any $D\in \D$
\end{itemize}
\end{proposition}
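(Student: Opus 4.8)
The plan is to prove the first isomorphism directly and then deduce the remaining two by a Yoneda argument inside the enriched homotopy category furnished by Theorem \ref{enrich}. The first isomorphism is precisely the statement that the derived pair $(F,U)$ is compatible with the homotopy function complexes, and it is the only assertion requiring genuine model-categorical input; the tensor and cotensor compatibilities will then be formal consequences once it is in hand.

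For the first isomorphism I would reduce to the case where $X\in\C$ is cofibrant and $Y\in\D$ is fibrant, so that no further (co)fibrant replacement is needed to compute the derived mapping spaces. Following Hovey \cite{Hovey}, I would compute $\Map_{\C}(X,UY)$ by means of a cosimplicial resolution (cosimplicial frame) $X^{\bullet}$ of $X$, giving the simplicial set $n\mapsto \Hom_{\C}(X^{n},UY)$. The key point is that a left Quillen functor preserves cofibrations and weak equivalences between cofibrant objects, hence carries $X^{\bullet}$ to a cosimplicial resolution $F(X^{\bullet})$ of $FX$; thus $\Map_{\D}(FX,Y)$ is computed by $n\mapsto \Hom_{\D}(F(X^{n}),Y)$. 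Applying the ordinary adjunction $\Hom_{\C}(X^{n},UY)\cong \Hom_{\D}(F(X^{n}),Y)$ levelwise yields an isomorphism of simplicial sets, natural in both variables, which descends to the asserted isomorphism in $\Ho(\sSet)$.

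Granting the first isomorphism, the second follows by testing against an arbitrary fibrant $D\in\D$ and chaining with the enrichment adjunctions supplied by Theorem \ref{enrich}:
\begin{align*}
\Map_{\D}(F(X\otimes C),D) &\cong \Map_{\C}(X\otimes C,UD) \\
&\cong \Map_{\sSet}(X,\Map_{\C}(C,UD)) \\
&\cong \Map_{\sSet}(X,\Map_{\D}(FC,D)) \\
&\cong \Map_{\D}(X\otimes FC,D).
\end{align*}
Since this is natural in $D$, the enriched Yoneda lemma in $\Ho(\D)$ gives $F(X\otimes C)\cong X\otimes FC$ in $\Ho(\D)$. The third isomorphism is proved dually: testing against an arbitrary cofibrant $C\in\C$ and using the cotensor adjunction of Theorem \ref{enrich}, one obtains
\begin{align*}
\Map_{\C}(C,U(A(X,D))) &\cong \Map_{\D}(FC,A(X,D)) \\
&\cong \Map_{\sSet}(X,\Map_{\D}(FC,D)) \\
&\cong \Map_{\sSet}(X,\Map_{\C}(C,UD)) \\
&\cong \Map_{\C}(C,A(X,UD)),
\end{align*}
whence $U(A(X,D))\cong A(X,UD)$ in $\Ho(\C)$ by Yoneda.

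The main obstacle is the first isomorphism, and within it the verification that a left Quillen functor sends a cosimplicial frame to a cosimplicial frame while preserving the accompanying weak equivalences, together with the bookkeeping needed to make the resulting isomorphism natural in both variables simultaneously (so that the subsequent Yoneda arguments actually apply). The remaining care is to ensure that the enrichment isomorphisms of Theorem \ref{enrich} are invoked on objects of the correct (co)fibrancy, which is why I fix $D$ fibrant and $C$ cofibrant before applying Yoneda.
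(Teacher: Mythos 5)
Your argument is correct; note that the paper itself gives no proof of this proposition, saying only that it ``can be deduced from'' Hovey, so the relevant comparison is with Hovey's own treatment (Chapters 5--6 of his book). Your proof of the first item is exactly Hovey's: compute $\Map_{\C}(X,UY)$ by a cosimplicial frame $X^{\bullet}$ on a cofibrant $X$, observe that a left Quillen functor preserves Reedy cofibrancy and (by Ken Brown's lemma) the weak equivalences in the frame, so $F(X^{\bullet})$ is a cosimplicial frame on $FX$, and conclude by the levelwise adjunction against a fibrant $Y$. For the tensor and cotensor compatibilities you diverge slightly: Hovey obtains $F(X\otimes C)\cong X\otimes FC$ directly from the coend formula $X\otimes C\cong X^{\bullet}\otimes_{\Delta}C$, using that $F$ preserves colimits and frames, whereas you deduce it formally from the first item by testing against all fibrant $D$ and invoking Yoneda in $\Ho(\D)$. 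Your route is legitimate and arguably cleaner at the homotopy-category level, but it does carry the obligations you yourself flag: the composite isomorphism must be natural in $D$ as an object of $\Ho(\D)$ (so that, after applying $\pi_{0}$, ordinary Yoneda in $\Ho(\D)$ applies), and the enrichment adjunctions of Theorem \ref{enrich} must be used on suitably (co)fibrant representatives. The direct coend argument avoids the Yoneda step entirely and gives the isomorphism before passing to naturality considerations, which is why Hovey prefers it; either way the proposition holds.
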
 
The proof of the precedent theorem and proposition can be deduced from \cite{Hovey}. 
The involved mapping spaces tensors and cotensors are defined in the \textbf{derived} sense, we took the liberty to not specify the derived symbols (e.g. $\mathbf{R}$ and $\mathbf{L}$).  

\begin{notation} If $\D$ is a pointed model category, we denote by $\Omega D$ the object $A(S^{1},D)$
and $\Sigma D$ the object $S^{1}\otimes D$.  
\subsection{Complement to Theorem \ref{th}}\label{complement}
We explain, the cited Theorem using the previous language. Let $R$ be cofibrant an augmented $\EE$-differential graded $\mathbb{Q}$-algebras. Considering the adjunction 
$$\xymatrix{ \Algk^{\ast} \ar@<2pt>[r]^-{F} & \EE-\Algk^{\ast}, \ar@<2pt>[l]^-{U}}$$ 
our theorem says that we have a natural map $S^{1}\otimes R\rightarrow F(S^{1}\otimes UR)\simeq S^{1}\otimes FUR $ which has a retract in $\Ho(\EE-\Algk^{\ast})$. In other words, suppose that $S\in\EE-\Algk^{\ast}$,  we have a retract in $\Ho(\sSetp)$ of the map
$$\Map_{\EE-\Algk^{\ast}} ( S^{1}\otimes R,S)\rightarrow \Map_{\EE-\Algk^{\ast}}( S^{1}\otimes FUR,S)$$
which can be rewritten by using adjunctions as:
$$h:\Omega \Map_{\EE-\Algk^{\ast}} (  R,S)\rightarrow \Omega\Map_{\Algk^{\ast}}(UR,US),$$
such that, there is an induced left inverse map $r$, i.e., $r\circ h=id$ and it is functorial with respect to $S$.  
\end{notation}


\bibliographystyle{plain} 
\bibliography{inclusion}

\begin{thebibliography}{10}

\bibitem{amranirational}
Ilias Amrani.
\newblock Comparing commutative and associative unbounded differential graded
  algebras over $\mathbb{Q}$ from homotopical point of view.
\newblock {\em arXiv preprint math/1401.7285}, 2013.

\bibitem{amrani2013mapping}
Ilias Amrani.
\newblock The mapping space of unbounded differential graded algebras.
\newblock {\em arXiv preprint arXiv:1303.6895}, 2013.

\bibitem{block2005andre}
Jonathan Block and Andrej Lazarev.
\newblock Andr{\'e}--{Q}uillen cohomology and rational homotopy of function
  spaces.
\newblock {\em Advances in Mathematics}, 193(1):18--39, 2005.

\bibitem{cohen2002homotopy}
Ralph~L Cohen and John~DS Jones.
\newblock A homotopy theoretic realization of string topology.
\newblock {\em Mathematische Annalen}, 324(4):773--798, 2002.

\bibitem{felix2004monoid}
Yves F{\'e}lix and Jean-Claude Thomas.
\newblock Monoid of self-equivalences and free loop spaces.
\newblock {\em Proceedings of the American Mathematical Society},
  132(1):305--312, 2004.

\bibitem{ginot2010hochschild}
Gr{\'e}gory Ginot.
\newblock On the {H}ochschild and {H}arrison (co)homology of
  $\mathrm{C}_{\infty}$-algebras and applications to string topology.
\newblock In {\em Deformation Spaces}, pages 1--51. Springer, 2010.

\bibitem{goerss1999}
P.G. Goerss and JF~Jardine.
\newblock {\em {Simplicial homotopy theory}}.
\newblock Birkhauser, 1999.

\bibitem{hinich1997homological}
Vladimir Hinich.
\newblock Homological algebra of homotopy algebras.
\newblock {\em Communications in algebra}, 25(10):3291--3323, 1997.

\bibitem{Hovey}
M.~Hovey.
\newblock Model categories.
\newblock {\em Mathematical Surveys and Monographs}, 63, 1999.

\bibitem{jardine2009representability}
JF~Jardine.
\newblock Representability theorems for simplicial presheaves.
\newblock {\em Preprint}, 2009.

\bibitem{mandell2001sub}
Michael~A Mandell.
\newblock ${E}_{\infty}$-algebras and $p$-adic homotopy theory.
\newblock {\em Topology}, 40(1):43--94, 2001.

\bibitem{sullivan1977infinitesimal}
Dennis Sullivan.
\newblock Infinitesimal computations in topology.
\newblock {\em Publications Math{\'e}matiques de l'IH{\'E}S}, 47(1):269--331,
  1977.

\end{thebibliography}

\end{document}